\providecommand{\U}[1]{\protect\rule{.1in}{.1in}}
\newtheorem{theorem}{Theorem}[section]
\newtheorem{lemma}[theorem]{Lemma}
\newtheorem{proposition}[theorem]{Proposition}
\newtheorem{remark}[theorem]{Remark}
\def\<{\langle}
\def\>{\rangle}
\def\d{{\rm d}}
\def\L{\mathcal{L}}
\def\div{{\rm div}}
\def\T{\mathbb{T}}
\def\R{\mathbb{R}}
\begin{document}

\title{Euler-Lagrangian approach to 3D stochastic Euler equations}
\author{Franco Flandoli\footnote{Email: franco.flandoli@sns.it. Scuola Normale Superiore of Pisa, Italy.} \ and
Dejun Luo\footnote{Email: luodj@amss.ac.cn. Key Laboratory of Random Complex Structures and Data Sciences, Academy of Mathematics and Systems Science, Chinese Academy of Sciences, Beijing 100190, China, and School of Mathematical Sciences, University of the Chinese Academy of Sciences, Beijing 100049, China. }}

\maketitle

\begin{abstract}
3D stochastic Euler equations with a special form of multiplicative noise are
considered. A Constantin-Iyer type representation in Euler-Lagrangian form
is given, based on stochastic characteristics. Local existence and uniqueness
of solutions in suitable H\"{o}lder spaces is proved from the Euler-Lagrangian formulation.
\end{abstract}

\textbf{MSC 2010:} 35Q31, 76B03

\textbf{Keywords:} Stochastic Euler equations, Euler-Lagrangian formulation, local existence, H\"older space

\makeatletter
\renewcommand\theequation{\thesection.\arabic{equation}}
\@addtoreset{equation}{section} \makeatother

\section{Introduction}

Recently the vorticity equation (Euler type) with noise%
\begin{equation}
\mathrm{d}\omega_{t}+\left(  u_{t}\cdot\nabla\omega_{t}-\omega_{t}\cdot\nabla
u_{t}\right)  \mathrm{d}t+\sum_{k}\left(  \sigma_{k}\cdot\nabla\omega
_{t}-\omega_{t}\cdot\nabla\sigma_{k}\right)  \circ\mathrm{d}W_{t}%
^{k}=0\label{eq vort}%
\end{equation}
with $\operatorname{div}u_{t}=0$, $\operatorname{curl}u_{t}=\omega_{t}$,
$\operatorname{div}\sigma_{k}=0$, has been studied by Darryl Holm and other
authors, see \cite{Holm, GBH, CFH}. It can be written as%
\begin{equation}
\mathrm{d}\omega_{t}+\mathcal{L}_{u_{t}}\omega_{t}\,\mathrm{d}t+\sum
_{k}\mathcal{L}_{\sigma_{k}}\omega_{t}\circ\mathrm{d}W_{t}^{k}%
=0,\label{eq vort-1}%
\end{equation}
where $\mathcal{L}_{u_{t}}\omega_{t}=[u_{t},\omega_{t}]=u_{t}\cdot\nabla
\omega_{t}-\omega_{t}\cdot\nabla u_{t}$ is the Lie derivative. Equations
related to fluid dynamics with multiplicative noise appeared in several other
works, see for instance \cite{BrzCapFla, FlaGat, FalkGaVer, MikuRoz, FlaSF} and many others. However, the geometric structure
in (\ref{eq vort}) has special properties, revealed also by the present work.

A first intermediate question we address is finding the noise form when the
equation is rewritten in the velocity-pressure variables $\left(  u,p\right)
$, instead of the velocity-vorticity variables $\left(  u,\omega\right)  $.
The dual operators $\mathcal{L}_{\sigma_{k}}^{\ast}$ appear. This is an
intermediate step in order to investigate the main topic of this work, namely
the Euler-Lagrangian formulation, called also Constantin-Iyer representation
after \cite{Cons, Cons-Iyer}; among related works, see for instance
\cite{Zhang, FangLuo}. We prove both the $\left(  u,p\right)  $-formulation and the
Euler-Lagrangian one in Proposition \ref{2-prop}. The Euler-Lagrangian form is
then used to prove a local in time existence and uniqueness result for
solutions in suitable H\"{o}lder spaces, new for equation (\ref{eq vort}). At
the end of the paper we heuristically digress on potential singularities from
the viewpoint of this stochastic model and its Euler-Lagrangian formulation,
adding some remarks to the discussion of P. Constantin  \cite[Section 5]{Cons}.
Finally we remark that we restrict ourselves to the three dimensional torus $\mathbb{T}^{3}$ for simplicity,
though the results remain valid in more general settings under suitable conditions.

\section{Equation for the velocity and its representation}

Let $\mathcal{L}_{\sigma_{k}}^{\ast}$ be the adjoint operator of
$\mathcal{L}_{\sigma_{k}}$ with respect to the inner product in $L^{2}%
(\mathbb{T}^{3},\mathbb{R}^{3})$:
\[
\<\mathcal{L}_{\sigma_{k}}^{\ast}v,w\>_{L^{2}}=-\<v,\mathcal{L}_{\sigma_{k}%
}w\>_{L^{2}}.
\]
Since $\sigma_{k}$ is divergence free, we have
\[
\mathcal{L}_{\sigma_{k}}^{\ast}v=\sigma_{k}\cdot\nabla v+(\nabla\sigma
_{k})^{\ast}v.
\]
Note that if the vector fields $u$ and $v$ are divergence free, then
$\mathrm{div}(\mathcal{L}_{u}v)=0$, but this is not necessarily true for
$\mathcal{L}_{u}^{\ast}v$.

Consider the following equation of characteristics, associated to
(\ref{eq vort}):%
\begin{equation}
\mathrm{d}X_{t}=u_{t}\left(  X_{t}\right)  \mathrm{d}t+\sum_{k}\sigma
_{k}\left(  X_{t}\right)  \circ\mathrm{d}W_{t}^{k}.\label{SDE}%
\end{equation}
Denote by $X_{t}$ also the stochastic flow associated to this equation, when
defined.

\begin{proposition}
\label{2-prop} For sufficiently smooth solutions, the equation, corresponding
to (\ref{eq vort}), for the velocity $u_{t}$ is%
\begin{equation}
\mathrm{d}u_{t}+\left(  u_{t}\cdot\nabla u_{t}+\nabla p_{t}\right)
\mathrm{d}t+\sum_{k}\mathcal{L}_{\sigma_{k}}^{\ast}u_{t}\circ\mathrm{d}%
W_{t}^{k}=0,\label{eq vel}%
\end{equation}
with the representation
\begin{equation}%
\begin{cases}
\aligned {\rm d}X_{t} & =\sum_{k}\sigma_{k}(X_{t})\circ\mathrm{d}W_{t}%
^{k}+u_{t}(X_{t})\,\mathrm{d}t,\\
u_{t}(x) & =\mathbf{P}\big[(\nabla X_{t}^{-1})^{\ast}\,u_{0}(X_{t}%
^{-1})\big](x),\endaligned
\end{cases}
\label{representation}%
\end{equation}
where $\mathbf{P}$ is the Leray--Hodge projection and $\ast$ means the transposition of matrices.
\end{proposition}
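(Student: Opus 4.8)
The plan is to prove the proposition in two logically separate pieces. First, I would derive the velocity equation \eqref{eq vel} from the vorticity equation \eqref{eq vort-1} by the standard ``uncurling'' argument adapted to the geometric noise structure. Since $\omega_t = \operatorname{curl} u_t$, the deterministic part of \eqref{eq vort} is the usual Euler vorticity equation, which is the curl of the Euler momentum equation $\mathrm{d}u_t + (u_t\cdot\nabla u_t + \nabla p_t)\,\mathrm{d}t$; this part is classical. The real content is the noise term: I must verify that the curl of $\mathcal{L}_{\sigma_k}^\ast u_t$ equals $\mathcal{L}_{\sigma_k}\omega_t$ up to a gradient, so that the momentum equation \eqref{eq vel} with noise $\mathcal{L}_{\sigma_k}^\ast u_t$ projects onto the correct vorticity dynamics. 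Using the stated formula $\mathcal{L}_{\sigma_k}^\ast v = \sigma_k\cdot\nabla v + (\nabla\sigma_k)^\ast v$ together with $\operatorname{div}\sigma_k = 0$, I would compute $\operatorname{curl}(\mathcal{L}_{\sigma_k}^\ast u_t)$ by a direct vector-calculus identity and match it against $\mathcal{L}_{\sigma_k}\omega_t = \sigma_k\cdot\nabla\omega_t - \omega_t\cdot\nabla\sigma_k$. The gradient part of $\mathcal{L}_{\sigma_k}^\ast u_t$ gets absorbed into a (stochastic) pressure; this is why the noise appears as $\mathcal{L}_{\sigma_k}^\ast u_t$ rather than $\mathcal{L}_{\sigma_k} u_t$, and is the conceptual point the ``dual operator'' discussion in Section~2 is setting up.

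Second, I would establish the Euler-Lagrangian representation \eqref{representation}. The key geometric fact is that the quantity $w_t := (\nabla X_t^{-1})^\ast\, u_0(X_t^{-1})$ — the pullback of the one-form $u_0^\flat$ by the inverse flow — solves the \emph{unprojected} momentum equation along characteristics. I would apply the It\^o-Wentzell / Kunita formula to the composition defining $w_t$, differentiating the product $(\nabla X_t^{-1})^\ast$ and $u_0(X_t^{-1})$ along the stochastic flow \eqref{SDE}. The Stratonovich formulation is essential here: because $\circ\mathrm{d}W_t^k$ obeys the ordinary chain and Leibniz rules, the pullback of a one-form under the Stratonovich flow generated by $u_t + \sum_k \sigma_k \circ \dot W^k$ evolves by the negative Lie derivative with respect to that same generating field. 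Carrying out this computation should produce exactly $\mathrm{d}w_t + (\text{transport and stretching by } u_t)\,\mathrm{d}t + \sum_k \mathcal{L}_{\sigma_k}^\ast w_t \circ\mathrm{d}W_t^k = \nabla(\text{something})$, i.e. \eqref{eq vel} up to a full gradient. Applying the Leray–Hodge projection $\mathbf{P}$ kills that gradient and recovers $u_t = \mathbf{P} w_t$, which is precisely the second line of \eqref{representation}.

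The main obstacle I anticipate is the It\^o-Wentzell computation for the Jacobian factor $(\nabla X_t^{-1})^\ast$. One must differentiate the inverse-flow relation $X_t^{-1}(X_t(x)) = x$ to obtain the SDE satisfied by $\nabla X_t^{-1}$, then combine it with the SDE for $u_0(X_t^{-1})$ via the Stratonovich Leibniz rule; keeping the matrix transpositions and the index bookkeeping straight is where errors creep in. The payoff is that, because everything is written in Stratonovich form, no It\^o correction terms appear and the algebra reduces to the classical deterministic pullback identity applied pathwise to the random flow. I would therefore verify the deterministic template first (the Constantin-Iyer identity for the Euler equations, \cite{Cons, Cons-Iyer}) and then argue that each term simply acquires its stochastic analogue coefficient, with the $\sigma_k$ fields playing the same structural role in the $\circ\mathrm{d}W_t^k$ slot that $u_t$ plays in the $\mathrm{d}t$ slot. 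A final consistency check is that the gradient terms generated by both the $\mathrm{d}t$ and the $\circ\mathrm{d}W^k$ parts are indeed pure gradients, so that a single application of $\mathbf{P}$ at the level of the representation is legitimate and compatible with the (possibly stochastic) pressure in \eqref{eq vel}.
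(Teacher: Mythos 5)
Your plan for the first assertion matches the paper's: the paper writes $\mathcal{L}_{\sigma_{k}}^{\ast}u_{t}=\omega_{t}\times\sigma_{k}+\nabla(\sigma_{k}\cdot u_{t})$ and $u_t\cdot\nabla u_t = \omega_t\times u_t + \tfrac12\nabla|u_t|^2$, then takes $\operatorname{curl}$ and uses $\operatorname{curl}(\omega\times v)=\mathcal{L}_{v}\omega$; your proposal to compute $\operatorname{curl}(\mathcal{L}_{\sigma_k}^\ast u_t)$ directly is the same computation in slightly different packaging. For the representation, however, you take a genuinely different route. You propose the direct Constantin--Iyer computation: apply the It\^o--Wentzell/Kunita formula pathwise to $w_t=(\nabla X_t^{-1})^\ast u_0(X_t^{-1})$, i.e.\ to the pushforward of the one-form $u_0^\flat$, show it satisfies the unprojected momentum equation, and then project. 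The paper instead dualizes: it tests against a fixed divergence-free field $v$, uses volume preservation to obtain the key identity $\langle u_t,v\rangle_{L^2}=\langle u_0,(X_t^{-1})_\ast v\rangle_{L^2}$, applies Kunita's formula to the pullback $(X_t^{-1})_\ast v$ of the \emph{test field}, and then reuses the key identity (legitimate because $\mathcal{L}_{\sigma_k}v$, $\mathcal{L}_{u_s}v$, $\mathcal{L}_{\sigma_k}^2v$ are again divergence free) to land exactly on the weak formulation \eqref{stoch-euler-3.1} that the paper adopts as the meaning of \eqref{eq vel}. The paper's route buys you two things: it avoids the delicate matrix-valued It\^o--Wentzell computation for $\nabla X_t^{-1}$ that you correctly identify as the main hazard, and it automatically disposes of all gradient remainders because every pairing is against a divergence-free field. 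Your route is workable and makes the geometric content more transparent, but be aware of one point you only half-address: the gradients you must discard arise not only in the $\mathrm{d}t$ slot (absorbed into $p_t$) but also in the $\circ\,\mathrm{d}W^k_t$ slot, as $\nabla(\sigma_k\cdot\nabla q_t)\circ\mathrm{d}W^k_t$ terms; equation \eqref{eq vel} as written carries no stochastic pressure, so your conclusion is that the representation solves \eqref{eq vel} only modulo gradients in both slots --- which is precisely, and only, the weak sense of Remark 2.2. You should state that solution concept explicitly before claiming the projection step ``recovers'' \eqref{eq vel}.
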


\begin{remark} {\rm
The stochastic equation \eqref{eq vel} is understood as follows: for any smooth divergence free field $v$ on $\mathbb T^3$,
  $$\aligned
  \<u_t,v\>_{L^2} &= \<u_0,v\>_{L^2} - \int_0^t \<u_s\cdot\nabla u_s, v\>_{L^2} \,\d s - \sum_k\int_0^t \<\L_{\sigma_k}^\ast u_s, v\>_{L^2} \circ\d W^k_s\\
  &= \<u_0,v\>_{L^2} + \int_0^t \<u_s, u_s\cdot\nabla v\>_{L^2} \,\d s + \sum_k\int_0^t \<u_s, \L_{\sigma_k} v\>_{L^2} \circ\d W^k_s.
  \endaligned$$
Since $\L_{\sigma_k} v$ is divergence free, we have
  $$\d \<u_s, \L_{\sigma_k} v\>_{L^2}\cdot \d W^k_s =\<u_s, \L_{\sigma_k}^2 v\>_{L^2}\, \d s.$$
Hence, we get
  \begin{equation}\label{stoch-euler-3.1}
  \aligned
  \<u_t, v\>_{L^2} &= \<u_0,v\>_{L^2} +\sum_k\int_0^t \<u_s, \L_{\sigma_k} v\>_{L^2}\,\d W^k_s\\
  &\hskip13pt +\frac12\sum_k\int_0^t \<u_s, \L_{\sigma_k}^2 v\>_{L^2}\, \d s + \int_0^t \<u_s, u_s\cdot\nabla v\>_{L^2}\, \d s.
  \endaligned
  \end{equation}  }
\end{remark}

\begin{proof}[Proof of Proposition \ref{2-prop}]
Let us show the first fact. Using $\operatorname{curl}u_{t}=\omega_{t}$, we have the vector identities%
\[
\omega_{t}\times u_{t}=-\frac{1}{2}\nabla\left(  u_{t}\cdot u_{t}\right)
+u_{t}\cdot\nabla u_{t}%
\]%
and
\begin{align*}
\omega_{t}\times\sigma_{k}  & =-\nabla\left(  \sigma_{k}\cdot u_{t}\right)
+\left(  D\sigma_{k}\right)  u_{t}+\sigma_{k}\cdot\nabla u_{t}\\
& =-\nabla\left(  \sigma_{k}\cdot u_{t}\right)  +\mathcal{L}_{\sigma_{k}%
}^{\ast}u_{t}.
\end{align*}
Hence, equation (\ref{eq vel}) can be rewritten as
\[
\d u_{t}+\left(  \omega_{t}\times u_{t}+\nabla\widetilde{p}_{t}\right)
\d t+\sum_{k}\left(  \omega_{t}\times\sigma_{k} + \nabla (\sigma_{k}\cdot u_{t})\right)  \circ \d W_{t}^{k}=0, %
\]
where $\widetilde{p}_{t}$ is a new pressure. Taking $\operatorname{curl}$ and
using the facts%
\begin{align*}
\operatorname{curl}\left(  \omega_{t}\times u_{t}\right) =\mathcal{L}%
_{u_{t}}\omega_{t},\quad
\operatorname{curl} (\nabla\widetilde{p}_{t}) =0,\quad
\operatorname{curl}\left(  \omega_{t}\times\sigma_{k}\right)
=\mathcal{L}_{\sigma_{k}}\omega_{t},%
\end{align*}
we get the equation \eqref{eq vort-1}.

Next we prove the second assertion. Let $u_t$ be expressed as in \eqref{representation}. For any divergence free vector field $v$, we have
  $$\aligned
  \<u_t, v\>_{L^2}&= \int \big\<(\nabla X_t^{-1})^\ast\, u_0(X_t^{-1}), v\big\>\,\d x \\
  &= \int \big\<u_0(X_t^{-1}), (\nabla X_t^{-1})\, v\big\>\,\d x\\
  &= \int \big\<u_0, (\nabla X_t^{-1}(X_t))\, v(X_t)\big\>\,\d x,
  \endaligned$$
where in the last step we used the change of variable formula and the fact that $X_t$ preserves the volume measure. Recall that $(\nabla X_t^{-1}(X_t))\, v(X_t) =(X_t^{-1})_\ast v$ is the pull-back of $v$ by the flow $X_t$, thus we obtain
  \begin{equation}\label{key-identity}
  \<u_t, v\>_{L^2} =\big\<u_0, (X_t^{-1})_\ast v\big\>_{L^2}\quad \mbox{for all } t\geq 0.
  \end{equation}
This equality holds for any divergence free vector field $v$.

Since $X_t$ is the flow generated by the SDE in \eqref{SDE}, by Kunita's formula (see \cite[p. 265]{Kunita}),
  $$\aligned (X_t^{-1})_\ast v &= v +\sum_k\int_0^t (X_s^{-1})_\ast (\L_{\sigma_k} v)\, \d W^k_s\\
  &\hskip13pt +\int_0^t \Big[(X_s^{-1})_\ast (\L_{u_s} v) +\frac12\sum_k (X_s^{-1})_\ast (\L_{\sigma_k}^2 v)\Big] \d s.
  \endaligned$$
Substituting this expression into \eqref{key-identity} yields
  $$\aligned
  \<u_t, v\>_{L^2} &= \<u_0, v\>_{L^2} + \sum_k\int_0^t \big\<u_0, (X_s^{-1})_\ast (\L_{\sigma_k} v)\big\>_{L^2}\, \d W^k_s\\
  &\hskip13pt + \int_0^t \Big[\big\<u_0, (X_s^{-1})_\ast (\L_{u_s} v) \big\>_{L^2} + \frac12\sum_k \big\<u_0, (X_s^{-1})_\ast (\L_{\sigma_k}^2 v)\big\>_{L^2}\Big]\, \d s.
  \endaligned $$
Since the vector fields $\L_{\sigma_k} v,\, \L_{u_s} v$ and $\L_{\sigma_k}^2 v$ are all divergence free, we apply \eqref{key-identity} and get
  \begin{equation*}
  \<u_t, v\>_{L^2} = \<u_0,v\>_{L^2} +\sum_k\int_0^t \<u_s, \L_{\sigma_k} v\>_{L^2}\,\d W^k_s +\int_0^t \Big[\<u_s, \L_{u_s} v\>_{L^2} + \frac12 \sum_k \<u_s,  \L_{\sigma_k}^2 v\>_{L^2} \Big]\d s.
  \end{equation*}
Note that
  $$\<u_s, \L_{u_s} v\>_{L^2}= \<u_s, u_s\cdot\nabla v\>_{L^2} -\<u_s, v\cdot\nabla u_s\>_{L^2}= \<u_s, u_s\cdot\nabla v\>_{L^2},$$
therefore, we obtain \eqref{stoch-euler-3.1}.
\end{proof}

\section{Local existence of the representation \eqref{representation}}

In this section we aim at proving the local existence of the system \eqref{representation}, by following the arguments in \cite{Cons, Iyer}. First we introduce some notations about H\"older (semi-)norms. For a function or vector field $u$ defined on $\mathbb T^3$ and $\alpha\in (0,1),\, l\in \mathbb N$,
  $$\aligned |u|_\alpha &=\sup_{x,y\in \mathbb T^3} \frac{|u(x)- u(y)|}{|x-y|^\alpha}, \\
  \|u\|_l&= \sum_{|m|\leq l} \sup_{x\in \mathbb T^3} |\partial^m u(x)|,\\
  \|u\|_{l,\alpha} &= \|u\|_l +\sum_{|m|=l} |\partial^m u|_\alpha,
  \endaligned$$
where $\partial^m$ denotes the derivative with respect to the multi-index $m\in \mathbb N^3$. Note that $\|\cdot\|_0$ is the usual supremum norm. We denote by $C^l$ and $C^{l, \alpha}$ the H\"older spaces with norms $\|u\|_l$ and $\|u\|_{l,\alpha}$, respectively.

We use the idea of \cite{BM, OP} to solve the SDE in the system \eqref{representation}. More precisely, we first solve the equation without drift:
  \begin{equation}\label{SDE-1}
  \d\varphi_t = \sum_k \sigma_k(\varphi_t)\circ \d W^k_t,\quad \varphi_0={\rm I},
  \end{equation}
where ${\rm I}$ is the identity diffeomorphism of $\mathbb T^3$. Under the assumption that $\sum_k \|\sigma_k\|_{l+3, \alpha'}^2 <\infty$ for some $l\in \mathbb N$ and $\alpha'\in (0,1)$, the above equation generates a stochastic flow $\{\varphi_t\}_{t\geq 0}$ of $C^{l+2,\alpha}$-diffeomorphisms on $\mathbb T^3$, where $\alpha\in (0,\alpha')$.

In this section we denote by $\omega$ a generic random element in a probability space $\Omega$; there will be no confusion with the notation of vorticity, since the latter does not appear in the current section. For a given random vector field $u:\Omega\times [0,T]\times \mathbb T^3 \to \R^3$, we define
  \begin{equation}\label{modified-drift}
  \tilde u_t(\omega, x)= \big[\big(\varphi_t(\omega, \cdot )^{-1}\big)_\ast \, u_t(\omega,\cdot) \big](x)
  \end{equation}
which is the pull-back of the field $u_t(\omega,\cdot)$ by the stochastic flow $\{\varphi_t(\omega, \cdot)\}_{t\geq 0}$. If we denote by $K_t(\omega, x)=(\nabla \varphi_t(\omega, x))^{-1}$, i.e., the inverse of the Jacobi matrix, then
  $$\tilde u_t(\omega, x) = K_t(\omega, x)\, u_t(\omega, \varphi_t(\omega, x)).$$
From this expression we see that if $u\in C([0,T], C^{l+1,\alpha})$ a.s., then one also has a.s. $\tilde u \in C([0,T], C^{l+1,\alpha})$. Moreover, if the process $u$ is adapted, then so is $\tilde u$. Now we consider the random ODE
  \begin{equation}\label{random-ode}
  \dot Y_t= \tilde u_t(Y_t),\quad Y_0= {\rm I}.
  \end{equation}
Applying the generalized It\^o formula, we see that (cf. \cite{BM, OP})
  $$X_t=\varphi_t \circ Y_t$$
is the flow of $C^{l+1,\alpha}$-diffeomorphisms associated to the SDE in \eqref{representation}.

Once we have the stochastic flow $\{X_t\}_{t\geq 0}$, we can use the second formula in \eqref{representation} to obtain a new random vector field $\hat u$. Our purpose is to show that this series of transforms have a fixed point.

From the above discussions, we see that we can fix a random element $\omega\in \Omega_0$, where $\Omega_0$ is some full measure set, and consider $\varphi_t(\omega,\cdot)$, $u_t(\omega,\cdot)$ and so on as deterministic objects. Hence in Section \ref{determ-case} we solve a deterministic fixed-point problem, and apply this result in Section \ref{local-existence} to prove the local existence of the system \eqref{representation}.

\subsection{Deterministic case}\label{determ-case}

In this section, we assume that we are given a deterministic family of diffeomorphisms $\{\varphi_t\}_{t\in [0,T]}$ of $\T^3$ satisfying $\varphi\in C\big([0,T], C^{l+2,\alpha}\big)$ and $\varphi_0= {\rm I}$. For $u\in C\big([0,T], C^{l+1,\alpha}(\T^3, \R^3)\big)$, we consider the following system:
  \begin{equation}\label{determ-system}
  \begin{cases} \aligned
  \tilde u_t(x) &= \big[(\varphi_t^{-1})_\ast u_t\big](x),\\
  \dot Y_t &= \tilde u_t(Y_t),\quad Y_0= {\rm I},\\
  X_t &= \varphi_t(Y_t),\\
  \hat u_t(x) &= \mathbf P\big[(\nabla X_t^{-1})^\ast u_0(X_t^{-1})\big](x).
  \endaligned \end{cases}
  \end{equation}
Following the arguments in \cite[Section 4]{Cons} and \cite[Section 4]{Iyer}, we shall prove that the map defined by $\hat u= \Phi(u)$ has a fixed point in
  $$\mathcal U_{\tau,U}=\bigg\{u\in C\big([0,\tau], C^{l+1,\alpha}(\T^3, \R^3)\big): \sup_{t\leq \tau} \|u_t\|_{l+1, \alpha} \leq U,\, \div(u_t)=0,\, u|_{t=0}=u_0\bigg\}$$
for some small $\tau$ and big $U$. Here is the main result of this section.

\begin{theorem}\label{thm-deterministic}
There exists $U= U(l,\|u_0\|_{l+1,\alpha})$ and $\tau=\tau(l, U,\varphi)$ such that the map $\Phi: \mathcal U_{\tau,U}\to \mathcal U_{\tau,U}$ has a unique fixed point.
\end{theorem}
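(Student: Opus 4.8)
The plan is to prove this via the Banach fixed-point theorem (contraction mapping principle). The strategy has two parts: first, show that $\Phi$ maps $\mathcal U_{\tau,U}$ into itself for appropriate choices of $U$ and $\tau$; second, show that $\Phi$ is a contraction on this set with respect to a suitable metric, which gives uniqueness of the fixed point. The choice of $U$ should depend only on $l$ and $\|u_0\|_{l+1,\alpha}$ (for instance $U = 2C\|u_0\|_{l+1,\alpha}$ for a structural constant $C$ absorbing the Leray projection and composition estimates), while $\tau$ is then chosen small depending on $l$, $U$, and the regularity of $\varphi$.

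First I would establish the invariance $\Phi(\mathcal U_{\tau,U}) \subseteq \mathcal U_{\tau,U}$. The output $\hat u_t = \mathbf P[(\nabla X_t^{-1})^\ast u_0(X_t^{-1})]$ is automatically divergence free because $\mathbf P$ is the Leray--Hodge projection, and at $t=0$ we have $X_0 = \mathrm I$ so $\hat u_0 = \mathbf P u_0 = u_0$ since $u_0$ is already divergence free; thus two of the three defining constraints of $\mathcal U_{\tau,U}$ hold for free. The substantive point is the bound $\sup_{t\le\tau}\|\hat u_t\|_{l+1,\alpha}\le U$. Here I would track the chain of transforms: estimate $\tilde u_t$ in $C^{l+1,\alpha}$ using the product/composition H\"older estimates for $K_t = (\nabla\varphi_t)^{-1}$ and $u_t\circ\varphi_t$ (all controlled by $\|\varphi\|_{C([0,\tau],C^{l+2,\alpha})}$ and $U$); then control the flow $Y_t$ of the ODE $\dot Y_t = \tilde u_t(Y_t)$ and its inverse in $C^{l+1,\alpha}$ via Gronwall-type arguments, noting that over a short time $\tau$ the flow $Y_t$ stays close to the identity so that $\|\nabla Y_t - \mathrm I\|_{l,\alpha}$ and hence $\|\nabla Y_t^{-1}\|_{l,\alpha}$ are bounded by a constant times $\tau U$; compose with $\varphi_t$ to get $X_t = \varphi_t(Y_t)$ and estimate $X_t^{-1}$ and $\nabla X_t^{-1}$; and finally apply boundedness of $\mathbf P$ on $C^{l+1,\alpha}$ (Schauder-type estimates for the projection on the torus). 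The key structural observation is that $\|\hat u_t\|_{l+1,\alpha} \le C\|u_0\|_{l+1,\alpha}(1 + O(\tau))$: the leading term is controlled by the initial datum alone, and the correction is small for small $\tau$, so choosing $U$ larger than $C\|u_0\|_{l+1,\alpha}$ and then $\tau$ small closes the estimate.

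Next I would prove the contraction property. Given $u^{(1)}, u^{(2)} \in \mathcal U_{\tau,U}$, I would estimate $\|\hat u^{(1)}_t - \hat u^{(2)}_t\|$ in terms of $\sup_{s\le t}\|u^{(1)}_s - u^{(2)}_s\|$ by differencing along each stage of the construction: the difference $\tilde u^{(1)} - \tilde u^{(2)}$ is linear in $u^{(1)}-u^{(2)}$ with coefficients controlled by $\varphi$; the difference of the flows $Y^{(1)} - Y^{(2)}$ is estimated by a Gronwall argument and picks up a factor $O(\tau)$; this propagates to $X^{(1)} - X^{(2)}$, to the inverse flows and their Jacobians, and finally through the bounded operator $\mathbf P$. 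The outcome should be an inequality of the form $\sup_{t\le\tau}\|\hat u^{(1)}_t - \hat u^{(2)}_t\|_{\ast} \le C(U,\varphi)\,\tau\,\sup_{t\le\tau}\|u^{(1)}_t - u^{(2)}_t\|_{\ast}$ in an appropriate norm $\|\cdot\|_\ast$, so that shrinking $\tau$ makes the Lipschitz constant strictly less than $1$.

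The main obstacle I anticipate is technical rather than conceptual: one must carry out the H\"older estimates for compositions and inverses of diffeomorphisms carefully, since $C^{l+1,\alpha}$ is not a Banach algebra under composition in a naive way and the estimates for $\nabla X_t^{-1}$ involve differentiating an implicitly defined inverse. A subtle point is the choice of the metric in which to run the contraction: closing the fixed-point argument in the full $C^{l+1,\alpha}$ norm may cause a loss of derivatives when differencing the flow maps, so it is often cleaner to establish the invariance (the $\Phi(\mathcal U_{\tau,U})\subseteq \mathcal U_{\tau,U}$ part) in the strong $C^{l+1,\alpha}$ norm but prove the contraction in a weaker norm such as $C^{l,\alpha}$ or $C^l$, and then invoke the compactness/interpolation structure of $\mathcal U_{\tau,U}$ to transfer uniqueness and convergence back to the strong topology. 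Verifying that this weaker-norm contraction still yields a fixed point inside the strong-norm ball is where the argument requires the most care, and this is precisely the point where I would follow the scheme of Constantin and Iyer in \cite{Cons, Iyer} most closely.
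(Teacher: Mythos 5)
Your proposal is correct and follows essentially the same route as the paper: invariance of $\mathcal U_{\tau,U}$ under $\Phi$ in the strong $C^{l+1,\alpha}$ norm (with $U$ a fixed multiple of $\|u_0\|_{l+1,\alpha}$ and $\tau$ small), contraction with respect to the weaker $C^{l,\alpha}$ norm, and passage to the limit using the closedness, convexity and uniform $C^{l+1,\alpha}$-boundedness of the ball, exactly as in the Constantin--Iyer scheme. The subtle point you flag---running the contraction in the weaker norm and then recovering the fixed point inside the strong-norm ball---is precisely how the paper concludes.
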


We need some preparations. The following result is taken from \cite[Lemma 4.1]{Iyer}.

\begin{lemma}\label{lem-1}
If $l\geq 1$ and $\alpha\in (0,1)$, then there exists $C=C(l,\alpha)$ such that
  $$\aligned \|f\circ g\|_{l,\alpha}&\leq C\|f\|_{l,\alpha} \big(1+\|\nabla g\|_{l-1,\alpha}\big)^{l+1},\\
  \|f_1\circ g_1- f_2\circ g_2\|_{l,\alpha} &\leq C \big(1+ \|\nabla g_1\|_{l-1,\alpha} +\|\nabla g_2\|_{l-1,\alpha}\big)^{l+1} \\
  &\hskip13pt \times \big[\|f_1 -f_2\|_{l,\alpha} + \big(\|\nabla f_1\|_{l,\alpha}\wedge \|\nabla f_2\|_{l,\alpha}\big) \|g_1 -g_2\|_{l,\alpha}\big].
  \endaligned $$
\end{lemma}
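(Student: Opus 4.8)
The plan is to reduce everything to two elementary facts about the spaces $C^{l,\alpha}(\T^3)$, both immediate from the definitions of the norms: they are Banach algebras, so that $\|fg\|_{l,\alpha}\le C\|f\|_{l,\alpha}\|g\|_{l,\alpha}$ and, at the level of the top seminorm, $|fg|_\alpha\le\|f\|_0\,|g|_\alpha+|f|_\alpha\,\|g\|_0$; and composition is controlled at the level of a single $\alpha$-seminorm by $|h\circ g|_\alpha\le|h|_\alpha\,\|\nabla g\|_0^\alpha$, together with the trivial identity $\|h\circ g\|_0=\|h\|_0$. Once these are in place, both inequalities follow by tracking the Fa\`a di Bruno expansion and counting factors.

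For the first inequality I would apply the Fa\`a di Bruno formula: for every multi-index $m$ with $1\le|m|\le l$,
\[
\partial^m(f\circ g)=\sum (\partial^\beta f)\circ g\cdot\prod_i\partial^{\gamma_i}g,
\]
a finite sum (with a number of terms depending only on $l$) in which $1\le|\beta|\le|m|$, each $|\gamma_i|\ge1$, the number of $g$-factors equals $|\beta|$, and $\sum_i|\gamma_i|=|m|$. For the sup part I bound $\|(\partial^\beta f)\circ g\|_0=\|\partial^\beta f\|_0\le\|f\|_l$ and each $\|\partial^{\gamma_i}g\|_0\le\|\nabla g\|_{l-1}$; since there are at most $l$ such factors this gives $C\|f\|_l(1+\|\nabla g\|_{l-1,\alpha})^l$. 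For the top-order $\alpha$-seminorm ($|m|=l$) I apply the product rule for $|\cdot|_\alpha$ termwise: the seminorm either falls on $(\partial^\beta f)\circ g$, where $|(\partial^\beta f)\circ g|_\alpha\le|\partial^\beta f|_\alpha\,\|\nabla g\|_0^\alpha\le C\|f\|_{l,\alpha}(1+\|\nabla g\|_{l-1,\alpha})$ (the binding case being $|\beta|=l$), or on a factor $\partial^{\gamma_i}g=\partial^{\gamma_i-e}\nabla g$, where $|\partial^{\gamma_i}g|_\alpha\le\|\nabla g\|_{l-1,\alpha}$; the remaining factors are estimated in sup norm as above. Counting shows each term carries at most $l+1$ powers of $(1+\|\nabla g\|_{l-1,\alpha})$ and exactly one power of $\|f\|_{l,\alpha}$, which yields the first inequality after absorbing the finitely many terms into $C(l,\alpha)$.

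For the difference estimate I would split
\[
f_1\circ g_1-f_2\circ g_2=(f_1-f_2)\circ g_1+(f_2\circ g_1-f_2\circ g_2),
\]
bound the first summand by the first inequality applied to $f_1-f_2$, and for the second use the fundamental theorem of calculus to write
\[
f_2\circ g_1-f_2\circ g_2=\Big(\int_0^1(\nabla f_2)\circ g_s\,\d s\Big)(g_1-g_2),\qquad g_s=g_2+s(g_1-g_2).
\]
Using the algebra property of $C^{l,\alpha}$, pulling the norm inside the integral (Minkowski), and applying the first inequality to $(\nabla f_2)\circ g_s$ while noting $\|\nabla g_s\|_{l-1,\alpha}\le\|\nabla g_1\|_{l-1,\alpha}+\|\nabla g_2\|_{l-1,\alpha}$, one obtains the bound with the factor $\|\nabla f_2\|_{l,\alpha}\|g_1-g_2\|_{l,\alpha}$. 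Expanding the other way, around $g_2$ and using $f_1$, gives the analogous bound with $\|\nabla f_1\|_{l,\alpha}$, and retaining the smaller of the two produces the minimum $\|\nabla f_1\|_{l,\alpha}\wedge\|\nabla f_2\|_{l,\alpha}$.

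The whole argument is essentially bookkeeping, so the only delicate point is making the constant $C$ depend on $(l,\alpha)$ alone while extracting the \emph{precise} exponent $l+1$ from the Fa\`a di Bruno expansion rather than a cruder power; in the difference estimate one must additionally keep the estimate uniform in $s\in[0,1]$ when applying the first inequality to the convex-combination argument $g_s$ before integrating. Since the statement is quoted from \cite[Lemma 4.1]{Iyer}, one may alternatively simply invoke it.
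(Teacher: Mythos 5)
The paper does not prove this lemma at all: it is imported verbatim as \cite[Lemma 4.1]{Iyer}, so there is no in-paper argument to compare against. Your sketch is a correct self-contained reconstruction of the standard proof (and is essentially the argument behind Iyer's lemma): Fa\`a di Bruno plus the elementary bounds $\|h\circ g\|_0=\|h\|_0$, $|h\circ g|_\alpha\le |h|_\alpha\|\nabla g\|_0^\alpha$ and the algebra property of $C^{l,\alpha}$ give the first inequality, and the worst-case count ($|m|=|\beta|=l$, seminorm landing on $(\partial^\beta f)\circ g$) yields $l+\alpha<l+1$ powers of $1+\|\nabla g\|_{l-1,\alpha}$, so the exponent $l+1$ is exactly right; the difference estimate via the splitting $(f_1-f_2)\circ g_1+(f_2\circ g_1-f_2\circ g_2)$, the fundamental theorem of calculus along $g_s=g_2+s(g_1-g_2)$, and symmetrizing in $(f_1,f_2)$ to obtain the minimum is also the standard route. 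Two small points you should make explicit if you write this out in full: the bound $\|\nabla g\|_0^\alpha\le 1+\|\nabla g\|_{l-1,\alpha}$ (valid since $\alpha<1$) is what lets you absorb the fractional power into the integer exponent; and on $\T^3$ the convex combination $g_s$ of torus-valued maps must be interpreted through the displacement fields $g_i-{\rm I}$ (as is the case in the application, where $g_i$ are diffeomorphisms isotopic to the identity), otherwise $g_1-g_2$ is not literally defined. Neither point affects the validity of the argument, and as you note one may in any case simply cite \cite[Lemma 4.1]{Iyer}, which is exactly what the paper does.
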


\begin{lemma}\label{lem-2}
Let $u\in C\big([0,T], C^{l+1,\alpha}(\T^3, \R^3)\big)$ and consider the ODE
  $$\dot Y_t= u_t(Y_t),\quad Y_0= {\rm I}.$$
Denote by $U_t= \sup_{s\leq t} \|u_s\|_{l+1,\alpha}$ and $\lambda= Y-{\rm I},\, \ell= Y^{-1} -{\rm I}$. Then there exists a continuous function $f_{l,\alpha}: [0,T]\times \R_+ \to \R_+$, which is increasing in both variables and $f_{l,\alpha}(0, \theta)=0$ for all $\theta\geq 0$, such that
  $$\|\nabla \lambda_t\|_{l,\alpha}\leq f_{l,\alpha}(t, U_t),\quad \|\nabla \ell_t\|_{l,\alpha}\leq f_{l,\alpha}(t, U_t), \quad t\leq T.$$
\end{lemma}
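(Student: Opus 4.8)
The plan is to reduce the whole statement to an estimate for the Jacobian $\nabla Y_t$ and then bootstrap in the order of regularity $l$ by a Gronwall argument, using Lemma~\ref{lem-1} to absorb the compositions that appear. Differentiating the flow equation $\dot Y_t=u_t(Y_t)$ in space and writing $\nabla Y_t=\mathrm I+\nabla\lambda_t$, one gets the linear (in the Jacobian) relation $\partial_t\nabla\lambda_t=\big((\nabla u_t)\circ Y_t\big)(\mathrm I+\nabla\lambda_t)$ with $\nabla\lambda_0=0$, hence the integral form
$$\nabla\lambda_t=\int_0^t \big((\nabla u_s)\circ Y_s\big)\,(\mathrm I+\nabla\lambda_s)\,\d s.$$
For the base case $l=0$ I would apply $\|\cdot\|_{0,\alpha}$ to this identity: the supremum norm of the integrand is bounded by $\|\nabla u_s\|_0(1+\|\nabla\lambda_s\|_0)\le U_s(1+\|\nabla\lambda_s\|_0)$ (composition with the bijection $Y_s$ preserves the sup norm), and the $\alpha$-seminorm is controlled through $|f\circ Y_s|_\alpha\le|f|_\alpha\|\nabla Y_s\|_0^\alpha$ together with the product rule for seminorms. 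Since $\|\nabla\lambda_s\|_0$ is already bounded by the preceding sup-norm Gronwall step, this is a closed linear integral inequality for $\|\nabla\lambda_t\|_{0,\alpha}$, and Gronwall yields a bound $f_{0,\alpha}(t,U_t)$ with $f_{0,\alpha}(0,\cdot)=0$.

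The heart of the argument is the inductive step $l-1\to l$ for $l\ge1$. Assuming $\|\nabla\lambda_t\|_{l-1,\alpha}\le f_{l-1,\alpha}(t,U_t)$, I would apply $\|\cdot\|_{l,\alpha}$ to the integral form and combine the standard algebra inequality $\|fg\|_{l,\alpha}\le C\|f\|_{l,\alpha}\|g\|_{l,\alpha}$ with Lemma~\ref{lem-1}, obtaining
$$\big\|\big((\nabla u_s)\circ Y_s\big)(\mathrm I+\nabla\lambda_s)\big\|_{l,\alpha}\le C\,\|u_s\|_{l+1,\alpha}\big(1+\|\nabla Y_s\|_{l-1,\alpha}\big)^{l+1}\big(1+\|\nabla\lambda_s\|_{l,\alpha}\big).$$
The crucial observation is that the nonlinear factor $(1+\|\nabla Y_s\|_{l-1,\alpha})^{l+1}=(1+\|\mathrm I+\nabla\lambda_s\|_{l-1,\alpha})^{l+1}$ only involves the $(l-1)$-level norm, so by the induction hypothesis and monotonicity it is dominated by a quantity depending only on $(t,U_t)$, while what multiplies $\|\nabla\lambda_s\|_{l,\alpha}$ stays linear in this top-order norm. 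Setting $\beta(s)=C\,U_s\big(1+\|\mathrm I\|_{l-1,\alpha}+f_{l-1,\alpha}(s,U_s)\big)^{l+1}$, a non-decreasing function of $s$, I reach $\|\nabla\lambda_t\|_{l,\alpha}\le\int_0^t\beta(s)\big(1+\|\nabla\lambda_s\|_{l,\alpha}\big)\,\d s$, and Gronwall together with $\int_0^t\beta\le t\,\beta(t)$ gives $\|\nabla\lambda_t\|_{l,\alpha}\le C\big(\exp(t\,\beta(t))-1\big)=:f_{l,\alpha}(t,U_t)$. One then checks directly that $f_{l,\alpha}$ is continuous, increasing in both arguments (because $\beta$ is), and vanishes at $t=0$ since the exponent carries a factor $t$.

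For the inverse flow I would use that the back-to-labels map $A_t=Y_t^{-1}$ (which is as regular as $Y_t$, the flow of a $C^{l+1,\alpha}$ field) solves the transport equation $\partial_t A_t+(u_t\cdot\nabla)A_t=0$, $A_0=\mathrm I$. Differentiating in space and reading the result along the characteristics $Y_t$ shows that $H_t:=(\nabla\ell_t)\circ Y_t$ satisfies $\dot H_t=-(\mathrm I+H_t)\big((\nabla u_t)\circ Y_t\big)$, which has exactly the same structure as the equation for $\nabla\lambda_t$; hence the very same induction bounds $\|H_t\|_{l,\alpha}$. Writing $\nabla\ell_t=H_t\circ Y_t^{-1}$ and estimating this composition by Lemma~\ref{lem-1}, the new factor $(1+\|\nabla Y_t^{-1}\|_{l-1,\alpha})^{l+1}=(1+\|\mathrm I+\nabla\ell_t\|_{l-1,\alpha})^{l+1}$ is again controlled at the lower level $l-1$ by the induction hypothesis, so no circularity arises. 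Taking the maximum of the two resulting bounds produces a single function $f_{l,\alpha}$ with the required properties. The main obstacle I anticipate is precisely this bookkeeping: guaranteeing that at each level the nonlinear composition factors only ever involve strictly lower-order norms, so the Gronwall inequality stays linear in the top order, and then verifying that after the successive Gronwall steps the accumulated constants assemble into one function that is genuinely monotone in $(t,U_t)$ and vanishes at $t=0$.
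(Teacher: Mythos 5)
Your treatment of the forward flow is essentially the paper's proof: the same integral identity $\nabla\lambda_t=\int_0^t((\nabla u_s)\circ Y_s)(\mathbb I+\nabla\lambda_s)\,\d s$, the same two-stage Gronwall argument at $l=0$ (sup norm first, then the H\"older seminorm split exactly as in the paper's $J_1+J_2$), and the same induction on $l$ in which Lemma \ref{lem-1} produces a composition factor involving only the $(l-1)$-level norm, so that the top-order inequality stays linear and Gronwall closes. Where you genuinely diverge is the inverse flow. The paper handles $\ell_t=Y_t^{-1}-{\rm I}$ by time reversal: for fixed $t$ it solves $\dot Y^t_s=-u_{t-s}(Y^t_s)$, notes $Y^t_t=Y_t^{-1}$, and simply reruns the forward estimate, so no new inequality is needed. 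You instead pass through the transport equation for the back-to-labels map, derive the ODE $\dot H_t=-(\mathbb I+H_t)((\nabla u_t)\circ Y_t)$ for $H_t=(\nabla\ell_t)\circ Y_t=(\nabla Y_t)^{-1}-\mathbb I$, bound $H_t$ by the same Gronwall scheme (the coefficient $(\nabla u_t)\circ Y_t$ is already controlled by the forward estimate), and then undo the composition via $\nabla\ell_t=H_t\circ Y_t^{-1}$ and Lemma \ref{lem-1}. This is correct and non-circular, since the composition factor $(1+\|\nabla Y_t^{-1}\|_{l-1,\alpha})^{l+1}$ is handled at level $l-1$ by your simultaneous induction (and at $l=0$ directly from the sup bound on $\mathbb I+H_t$); the price is the extra composition step and the bookkeeping of a second inductive chain, whereas time reversal gives the inverse-flow bound for free as a literal instance of the forward one. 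Both routes yield a function $f_{l,\alpha}$ with the stated monotonicity and vanishing at $t=0$.
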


\begin{proof}
We first prove the case for $l=0$. Using the integral form of the ODE, it is clear that
  \begin{equation}\label{lem-2.1}
  \nabla \lambda_t= \int_0^t (\nabla u_s)(Y_s)\, (\mathbb I+ \nabla \lambda_s)\,\d s,
  \end{equation}
where $\mathbb I$ is the $3\times 3$ identity matrix. Therefore,
  $$\|\nabla \lambda_t\|_0\leq \int_0^t \|(\nabla u_s)(Y_s)\|_0 \, \|\mathbb I+ \nabla \lambda_s\|_0\,\d s \leq U_t \int_0^t (1+ \|\nabla \lambda_s\|_0)\,\d s.$$
The Gronwall inequality implies that
  \begin{equation}\label{lem-2.2}
  1+ \|\nabla \lambda_t\|_0\leq e^{tU_t}.
  \end{equation}
Now for $x,y\in \T^3,\, x\neq y$, we deduce from \eqref{lem-2.1} that
  \begin{equation}\label{lem-2.3}
  \aligned \frac{|\nabla\lambda_t(x)- \nabla\lambda_t(y)|}{|x-y|^\alpha}
  &\leq \int_0^t \frac{\big|(\nabla u_s)(Y_s(x)) - (\nabla u_s)(Y_s(y)) \big|}{|x-y|^\alpha} \, |\mathbb I+ \nabla \lambda_s(x)|\,\d s\\
  &\hskip13pt +  \int_0^t |(\nabla u_s)(Y_s(x))| \frac{\big| \nabla \lambda_s(x) - \nabla \lambda_s(y) \big|} {|x-y|^\alpha} \,\d s\\
  &=: J_1 + J_2.\endaligned
  \end{equation}
We have
  $$\aligned
  J_1 &\leq \int_0^t [\nabla u_s]_\alpha \|\nabla Y_s\|_0^\alpha  (1+\|\nabla \lambda_s\|_0)\,\d s \leq \int_0^t [\nabla u_s]_\alpha  (1+\|\nabla \lambda_s\|_0)^{1+\alpha}\,\d s\\
  &\leq \int_0^t U_s\, e^{(1+\alpha)s U_s}\,\d s \leq \frac{1}{1+\alpha}\big( e^{(1+\alpha)t U_t} -1\big),
  \endaligned$$
where the third inequality follows from \eqref{lem-2.2}. Moreover,
  $$J_2\leq  \int_0^t \|\nabla u_s\|_0 |\nabla\lambda_s|_\alpha \,\d s \leq \int_0^t U_s\, |\nabla\lambda_s|_\alpha \,\d s.$$
Substituting the estimates $J_1$ and $J_2$ into \eqref{lem-2.3}, we deduce that
  $$|\nabla\lambda_t|_\alpha\leq  \frac{1}{1+\alpha}\big( e^{(1+\alpha)t U_t} -1\big) + \int_0^t U_s\, |\nabla\lambda_s|_\alpha \,\d s.$$
Gronwall's inequality leads to
  $$|\nabla\lambda_t|_\alpha \leq \frac{e^{t U_t}}{1+\alpha}\big( e^{(1+\alpha)t U_t} -1\big).$$
Combining this estimate with \eqref{lem-2.2} and using the simple inequality $e^t-1\leq t e^t\, (t\geq 0)$, we conclude that
  $$\|\nabla\lambda_t\|_{0,\alpha} \leq 2 tU_t\, e^{(2+\alpha)t U_t},\quad t\leq T.$$

Now we prove the general case by induction. Taking the $C^{l,\alpha}$-norm in \eqref{lem-2.1} and using Lemma \ref{lem-1}, we obtain
  $$\aligned
  \|\nabla \lambda_t\|_{l,\alpha}&\leq C_l \int_0^t \|\nabla u_s\|_{l,\alpha} (1+ \|\nabla Y_s\|_{l-1,\alpha})^{l+1} (1+ \|\nabla \lambda_s\|_{l,\alpha})\,\d s\\
  &\leq C_l \int_0^t \|\nabla u_s\|_{l,\alpha} (2+ \|\nabla \lambda_s\|_{l-1,\alpha})^{l+1} (1+ \|\nabla \lambda_s\|_{l,\alpha})\,\d s.
  \endaligned$$
Using the induction hypothesis, we have
  $$\|\nabla \lambda_t\|_{l,\alpha} \leq C_l \int_0^t U_s (2+ f_{l-1,\alpha}(s, U_s))^{l+1} (1+ \|\nabla \lambda_s\|_{l,\alpha})\,\d s.$$
Again by Gronwall's inequality,
  $$1+ \|\nabla \lambda_t\|_{l,\alpha}\leq \exp\big[C_l t U_t (2+ f_{l-1,\alpha}(t, U_t))^{l+1}\big].$$
The proof of the first estimate is complete.

To prove the second assertion, note that the inverse flow $Y^{-1}_t$ can be obtained by reversing the time. More precisely, fix any $t\in (0,T]$ and consider
  $$\dot Y^t_s= -u_{t-s}(Y^t_s),\quad 0\leq s\leq t,\quad  Y^t_0= {\rm I}.$$
Then $Y^t_t= Y^{-1}_t$. Similar to the above arguments, we can prove estimates for $\lambda^t_s = Y^t_s-{\rm I}$, and hence for $\ell_t= Y^{-1}_t- {\rm I}=Y^t_t- {\rm I}=\lambda^t_t$, which only depends on the $C^{l+1,\alpha}$-norm of $u_s$ for  $s\in [0,t]$, the latter being dominated by $U_t$. In this way, we obtain the second result.
\end{proof}

We need the following key technical result, see \cite[Proposition 1]{Cons} or \cite[Corollary 3.1]{Iyer}.

\begin{lemma}\label{lem-4}
For $l\geq 1$, the operator $\mathbf W:(\ell, v)\to \mathbf P [(\mathbb I +\nabla \ell)^\ast v]$ is well defined on $C^{l,\alpha}\times C^{l,\alpha}$ with values in $C^{l,\alpha}$; moreover, there is $C>0$ depending only on $l$ and $\alpha$ such that
  $$\|\mathbf W(\ell, v)\|_{l,\alpha}\leq C (1+\|\nabla \ell\|_{l-1,\alpha}) \|v\|_{l,\alpha}.$$
\end{lemma}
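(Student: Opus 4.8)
The plan is to reduce the whole statement to two standard facts together with one structural cancellation. The two facts are: the Leray--Hodge projection $\mathbf P$ is bounded on each Hölder space $C^{j,\alpha}$, and $C^{j,\alpha}$ is a multiplicative algebra up to a constant. The cancellation is $\mathbf P\nabla=0$, which is exactly what lets me save one derivative on $\ell$ and reach the sharp factor $\|\nabla\ell\|_{l-1,\alpha}$ rather than $\|\nabla\ell\|_{l,\alpha}$. For the first fact I write $\mathbf P=\mathbb I-\nabla(-\Delta)^{-1}\div$, so $\mathbf P$ is a matrix of constant-coefficient Fourier multipliers with symbols $\delta_{ij}-\xi_i\xi_j/|\xi|^2$, smooth and homogeneous of degree $0$ away from the origin; by classical Schauder / Calder\'on--Zygmund theory such zeroth-order operators map $C^{j,\alpha}$ into itself for every $j\ge0$, $\alpha\in(0,1)$, and I take this as the main analytic input (it is the content invoked in \cite[Proposition 1]{Cons}). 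I record the two consequences used below: $\mathbf P$ commutes with every $\partial^m$, and $\mathbf P(\nabla q)=0$ for scalar $q$. \textbf{A trap to respect:} $\mathbf P$ is \emph{not} bounded on the sup-norm space $C^0$, so every estimate on a supremum norm must be routed through the corresponding $C^{0,\alpha}$ norm.

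Next I reduce. Writing $(\mathbb I+\nabla\ell)^\ast v=v+(\nabla\ell)^\ast v$ gives $\mathbf W(\ell,v)=\mathbf P v+\mathbf P[(\nabla\ell)^\ast v]$, and $\|\mathbf P v\|_{l,\alpha}\le C\|v\|_{l,\alpha}$ by the mapping property. For the second term I would \emph{not} estimate the product directly, since $\|(\nabla\ell)^\ast v\|_{l,\alpha}$ costs $\|\nabla\ell\|_{l,\alpha}$, one derivative too many. Instead, for each multi-index $m$ with $|m|\le l$ I bound $\|\partial^m\mathbf W(\ell,v)\|_{0,\alpha}$ (which dominates both $\|\partial^m\mathbf W\|_0$ and, when $|m|=l$, the seminorm $|\partial^m\mathbf W|_\alpha$). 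Commuting $\partial^m$ through $\mathbf P$ and expanding by Leibniz,
\begin{equation*}
\partial^m\mathbf P[(\nabla\ell)^\ast v]=\mathbf P\Big[(\partial^m\nabla\ell)^\ast v+\sum_{m'<m}\binom{m}{m'}(\partial^{m'}\nabla\ell)^\ast\partial^{m-m'}v\Big].
\end{equation*}
In every term of the sum at most $l-1$ derivatives fall on $\nabla\ell$ and at least one on $v$, so each factor lies in $C^{0,\alpha}$, the product is bounded there by $C\|\nabla\ell\|_{l-1,\alpha}\|v\|_{l,\alpha}$, and the $C^{0,\alpha}\to C^{0,\alpha}$ bound for $\mathbf P$ disposes of them; the same direct argument handles the leading term whenever $|m|<l$, since then $|m|+1\le l$ and $\partial^m\nabla\ell$ is already controlled by $\|\nabla\ell\|_{l-1,\alpha}$.

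The only genuinely dangerous term is $(\partial^m\nabla\ell)^\ast v$ when $|m|=l$, which carries $l+1$ derivatives of $\ell$. Here I use the pointwise identity
\begin{equation*}
(\nabla h)^\ast v=\nabla(h\cdot v)-(\nabla v)^\ast h,\qquad h:=\partial^m\ell,
\end{equation*}
together with $\mathbf P\nabla=0$, to obtain $\mathbf P[(\partial^m\nabla\ell)^\ast v]=-\mathbf P[(\nabla v)^\ast\partial^m\ell]$. The derivative has now moved onto $v$: since $\|\nabla v\|_{0,\alpha}\le\|v\|_{l,\alpha}$ and $\|\partial^m\ell\|_{0,\alpha}\le\|\nabla\ell\|_{l-1,\alpha}$ (writing $\partial^m\ell=\partial^{m'}\partial_i\ell$ with $|m'|=l-1$), the product lies in $C^{0,\alpha}$ with norm $\le C\|\nabla\ell\|_{l-1,\alpha}\|v\|_{l,\alpha}$, and one last application of the $\mathbf P$ bound controls it. Collecting the contribution from $\mathbf P\partial^m v$ and summing over $|m|\le l$ then yields $\|\mathbf W(\ell,v)\|_{l,\alpha}\le C(1+\|\nabla\ell\|_{l-1,\alpha})\|v\|_{l,\alpha}$, where the ``$1+$'' absorbs the $\mathbf P v$ term.

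The step I expect to be the real obstacle is the mapping property of $\mathbf P$ on $C^{l,\alpha}$: once it is in hand, the combinatorics of the Leibniz split is routine, and the $C^0$-unboundedness of $\mathbf P$ is the subtlety that forces even the lowest-order estimates to be phrased through $C^{0,\alpha}$. The cancellation $\mathbf P\nabla=0$ in the last step is precisely what converts the naive loss of one derivative on $\ell$ into the sharp factor $\|\nabla\ell\|_{l-1,\alpha}$, and it is the only place where the Leray structure (as opposed to a generic product estimate) is genuinely used.
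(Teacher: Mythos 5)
Your argument is correct, but note that the paper itself gives no proof of this lemma: it is imported verbatim from \cite[Proposition 1]{Cons} and \cite[Corollary 3.1]{Iyer}. What you have written is essentially the proof in those references --- boundedness of $\mathbf P$ on $C^{0,\alpha}$, a Leibniz expansion of $\partial^m[(\nabla\ell)^\ast v]$, and the cancellation $\mathbf P[(\nabla h)^\ast v]=-\mathbf P[(\nabla v)^\ast h]$ applied to the single top-order term $h=\partial^m\ell$ with $|m|=l$, which is exactly how the factor $\|\nabla\ell\|_{l-1,\alpha}$ (rather than $\|\nabla\ell\|_{l,\alpha}$) is obtained. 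The only point you leave implicit is the justification for commuting $\partial^m$ with $\mathbf P$ and applying Leibniz when $(\nabla\ell)^\ast v$ is a priori only $C^{l-1,\alpha}$; this is handled by a routine mollification/density argument and does not affect the conclusion.
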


Now we can prove

\begin{proposition}\label{prop-1}
There exist $U=U(l,\|u_0\|_{l+1,\alpha})>0$ and $\tau_1=\tau_1(l,U, \varphi)>0$ such that $\Phi(\mathcal U_{\tau_1, U})\subset \mathcal U_{\tau_1, U}$.
\end{proposition}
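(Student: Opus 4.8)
The plan is to show that $\Phi$ maps $\mathcal U_{\tau_1,U}$ into itself by tracking, for an arbitrary $u\in\mathcal U_{\tau_1,U}$, how the bound $\sup_{t\le\tau}\|u_t\|_{l+1,\alpha}\le U$ propagates through the four stages of \eqref{determ-system}, and by separately verifying the constraints $\operatorname{div}(\hat u_t)=0$ and $\hat u|_{t=0}=u_0$ that define the set.

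Let me work through the stages in order.

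\begin{proof}
First I would estimate $\tilde u_t=(\varphi_t^{-1})_\ast u_t$. Writing $\tilde u_t(x)=(\nabla\varphi_t(x))^{-1}u_t(\varphi_t(x))$, this is a composition and a matrix product, so Lemma \ref{lem-1} (composition estimate) together with the assumed bound $\varphi\in C([0,T],C^{l+2,\alpha})$ gives
  $$\sup_{t\le\tau}\|\tilde u_t\|_{l+1,\alpha}\le C(\varphi)\,U=:\tilde U,$$
where $C(\varphi)$ depends only on $l,\alpha$ and on $\sup_{t\le T}\|\varphi_t\|_{l+2,\alpha}$ (and on $\sup_t\|(\nabla\varphi_t)^{-1}\|_{l+1,\alpha}$, which is finite since $\varphi_t$ is a $C^{l+2,\alpha}$-diffeomorphism). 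The key point is that $\tilde U$ is \emph{linear} in $U$ with a constant not depending on $\tau$.

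Next I would apply Lemma \ref{lem-2} to the ODE $\dot Y_t=\tilde u_t(Y_t)$ with velocity bound $\tilde U$. This yields a continuous increasing $f_{l,\alpha}$ with $f_{l,\alpha}(0,\cdot)=0$ such that, writing $\ell_t=Y_t^{-1}-\mathrm I$,
  $$\sup_{t\le\tau}\|\nabla\ell_t\|_{l,\alpha}\le f_{l,\alpha}(\tau,\tilde U).$$
Because $f_{l,\alpha}(0,\tilde U)=0$, this quantity can be made as small as we like by shrinking $\tau$; in particular I would later demand $\tau_1$ small enough that $f_{l,\alpha}(\tau_1,\tilde U)\le 1$. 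Then $X_t=\varphi_t\circ Y_t$ is a $C^{l+1,\alpha}$-diffeomorphism, and I would likewise record that $X_t^{-1}=Y_t^{-1}\circ\varphi_t^{-1}$ has $\|\nabla X_t^{-1}-\mathbb I\|_{l,\alpha}$ controlled, via Lemma \ref{lem-1}, by $\|\varphi_t\|_{l+2,\alpha}$ and $\|\nabla\ell_t\|_{l,\alpha}$.

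For the final stage, observe that $\hat u_t(x)=\mathbf P[(\nabla X_t^{-1})^\ast u_0(X_t^{-1})](x)=\mathbf W(\mathcal L_t,\, u_0\circ X_t^{-1})$, where I set $\mathcal L_t$ so that $\mathbb I+\nabla\mathcal L_t=\nabla X_t^{-1}$. Lemma \ref{lem-4} gives
  $$\|\hat u_t\|_{l+1,\alpha}\le C\bigl(1+\|\nabla X_t^{-1}-\mathbb I\|_{l,\alpha}\bigr)\,\|u_0\circ X_t^{-1}\|_{l+1,\alpha},$$
and Lemma \ref{lem-1} bounds $\|u_0\circ X_t^{-1}\|_{l+1,\alpha}\le C\|u_0\|_{l+1,\alpha}(1+\|\nabla X_t^{-1}\|_{l,\alpha})^{l+2}$. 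Since $\mathbf P$ projects onto divergence-free fields, $\operatorname{div}(\hat u_t)=0$ automatically; and at $t=0$ one has $X_0=\varphi_0\circ Y_0=\mathrm I$, so $\hat u_0=\mathbf P[u_0]=u_0$ (using $\operatorname{div}u_0=0$), giving the initial condition. The crux is now a two-step choice of constants: I would first fix $U:=2C\|u_0\|_{l+1,\alpha}$ (with $C$ the combined constant above evaluated at $\|\nabla X^{-1}-\mathbb I\|_{l,\alpha}\le 1$), which pins down $\tilde U=C(\varphi)U$; then, having $U$ and $\tilde U$ fixed, use the smallness $f_{l,\alpha}(\tau_1,\tilde U)\to 0$ as $\tau_1\to0$ to choose $\tau_1$ so small that both $\|\nabla X_t^{-1}-\mathbb I\|_{l,\alpha}\le1$ for $t\le\tau_1$ and the resulting bound on $\|\hat u_t\|_{l+1,\alpha}$ does not exceed $U$. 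I expect the main obstacle to be precisely this ordering of quantifiers: $U$ must be chosen \emph{before} $\tau_1$ (since $U$ depends only on $l$ and $\|u_0\|_{l+1,\alpha}$, not on $\varphi$ or $\tau$), so one has to verify that the constant $C$ multiplying $\|u_0\|_{l+1,\alpha}$ in the estimate for $\|\hat u_t\|_{l+1,\alpha}$ can be taken independent of $U$ and $\tau$ once the diffeomorphism factors are frozen at the level $\|\nabla X^{-1}-\mathbb I\|_{l,\alpha}\le1$; the factor $2$ in the definition of $U$ then leaves exactly the slack that the $\tau_1$-dependent corrections must fit into.
\end{proof}
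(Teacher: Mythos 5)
Your proposal is correct and follows essentially the same route as the paper: the same four-stage tracking of the bound through $\tilde u \to Y \to X \to \hat u$ via Lemmas \ref{lem-1}, \ref{lem-2} and \ref{lem-4}, the same choice of $U$ as a fixed multiple of $\|u_0\|_{l+1,\alpha}$ made \emph{before} $\tau_1$, and the same mechanism of freezing the diffeomorphism factor at $\|\nabla X_t^{-1}-\mathbb I\|_{l,\alpha}\le 1$. The one place where the paper is more careful is in splitting $\nabla X_t^{-1}-\mathbb I=\nabla m_t+\nabla(\ell_t\circ\varphi_t^{-1})$ with $m_t=\varphi_t^{-1}-\mathrm I$: the second term is indeed killed by $f_{l,\alpha}(\tau_1,\tilde U)\to 0$, but the first is not controlled by $f_{l,\alpha}$ at all and instead tends to $0$ because $\varphi_0=\mathrm I$ and $\varphi\in C([0,T],C^{l+2,\alpha})$ (the paper's $C_{l,m,t}\to 0$), so your claim that shrinking $\tau_1$ forces $\|\nabla X_t^{-1}-\mathbb I\|_{l,\alpha}\le 1$ needs this extra continuity-in-time input, which is available but should be cited explicitly.
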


\begin{proof}
Let $U>0$ be a constant which will be determined later. We divide the proof into four steps.

\emph{Step 1}. Take $u\in \mathcal U_{T, U}$. By the definition of $\tilde u$ in \eqref{determ-system}, we have
  $$\aligned
  \|\tilde u_t\|_{l+1,\alpha}& \leq C_l \|(\nabla\varphi_t)^{-1}\|_{l+1,\alpha} \|u_t\circ \varphi_t\|_{l+1,\alpha} \\ &\leq C_l \|(\nabla\varphi_t)^{-1}\|_{l+1,\alpha} \|u_t\|_{l+1,\alpha} (1+\|\nabla\varphi_t\|_{l,\alpha})^{l+2}.
  \endaligned$$
Note that
  $$\|(\nabla\varphi_t)^{-1}\|_{l+1,\alpha} = \|(\nabla\varphi_t^{-1})\circ \varphi_t\|_{l+1,\alpha} \leq C_l \|\nabla\varphi_t^{-1}\|_{l+1,\alpha} (1+\|\nabla\varphi_t\|_{l,\alpha})^{l+2}.$$
Therefore,
  $$\tilde U_t:= \sup_{s\leq t} \|\tilde u_s\|_{l+1,\alpha}\leq C_{l,\varphi,t} \, U_t,$$
where
  \begin{equation}\label{prop-1.1}
  C_{l,\varphi,t}=\sup_{s\leq t} \big(1+\|\varphi_s\|_{l+2,\alpha}\vee \|\varphi_s^{-1}\|_{l+2,\alpha}\big)^{2l+5}.
  \end{equation}

\emph{Step 2}. Let $Y$ be the flow generated by $\tilde u$, and denote by $\ell= Y^{-1}- {\rm I}$. Then applying Lemma \ref{lem-2} with $u$ replaced by $\tilde u$ gives us
  $$\|\nabla \ell_t\|_{l,\alpha}\leq f_{l,\alpha}(t, \tilde U_t) \leq f_{l,\alpha}(t, C_{l,\varphi, t}\, U_t),\quad t\leq T.$$

\emph{Step 3}. Let $X_t=\varphi_t\circ Y_t$ and denote by $m_t = \varphi_t^{-1}- {\rm I},\,0\leq t\leq T$. Then we have
  $$X_t^{-1}= Y_t^{-1}\circ \varphi_t^{-1} = \varphi_t^{-1} + \ell_t \circ\varphi_t^{-1}= {\rm I} + m_t + \ell_t \circ\varphi_t^{-1}.$$
By Lemma \ref{lem-1} and \emph{Step 2},
  $$\aligned
  \|\nabla(\ell_t\circ \varphi^{-1}_t)\|_{l,\alpha}&\leq C_l\|(\nabla\ell_t)\circ \varphi^{-1}_t)\|_{l,\alpha} \|\nabla \varphi^{-1}_t\|_{l,\alpha}\\
  &\leq C_l\|\nabla\ell_t\|_{l,\alpha} \big(1+ \|\nabla \varphi^{-1}_t\|_{l-1,\alpha}\big)^{l+1} \|\nabla \varphi^{-1}_t\|_{l,\alpha}\\
  &\leq C_{l,\varphi, t}\, f_{l,\alpha}(t, C_{l,\varphi, t}\, U_t),
  \endaligned$$
where $C_{l,\varphi, t}$ is defined in \eqref{prop-1.1}.

Let $\tilde\ell= m+\ell\circ \varphi^{-1}$ and $C_{l,m,t}:=\sup_{s\leq t}\|\nabla m_s\|_{l,\alpha}$. Then
  \begin{equation}\label{prop-1.2}
  \|\nabla \tilde\ell_t\|_{l,\alpha}\leq \|\nabla m_t\|_{l,\alpha} + \|\nabla(\ell_t\circ \varphi^{-1}_t)\|_{l,\alpha} \leq C_{l,m,t}+ C_{l,\varphi, t}\, f_{l,\alpha}(t, C_{l,\varphi, t}\, U_t).
  \end{equation}

\emph{Step 4}. By the definition of $\hat u$ in \eqref{determ-system} and \emph{Step 3}, we have
  $$\hat u_t= \mathbf P\big[(\nabla X_t^{-1})^\ast\, u_0(X_t^{-1})\big] = \mathbf W\big(\tilde\ell_t, u_0(X_t^{-1}) \big). $$
Lemmas \ref{lem-4} and \ref{lem-1} imply that
  \begin{equation}\label{prop-1.3}
  \|\hat u_t\|_{l+1,\alpha}\leq C(1+\|\nabla \tilde\ell_t\|_{l,\alpha})\|u_0\circ ({\rm I}+\tilde \ell_t ) \|_{l+1,\alpha} \leq C_l \|u_0\|_{l+1,\alpha}(1+\|\nabla \tilde\ell_t\|_{l,\alpha})^{l+3}.
  \end{equation}
Let $U=2^{l+3}C_l\|u_0\|_{l+1,\alpha}$. Since $\varphi_0= {\rm I}$, one has $m_0\equiv 0$ which implies that $C_{l,m,t}$ decreases to 0 as $t\to 0$. Hence, by the definitions of $C_{l,m,t}$ and $f_{l,\alpha}(t, \theta)$, we see that
  $$\tau_1=\inf\big\{t>0: C_{l,m,t}+ C_{l,\varphi, t}\, f_{l,\alpha}(t, C_{l,\varphi, t}\, U) >1 \big\} >0.$$
For $u\in \mathcal U_{\tau_1, U}$, one has $U_t=\sup_{s\leq t} \|u_s\|_{l+1,\alpha} \leq U$ for all $t\leq \tau_1$. By \eqref{prop-1.2}, $\|\nabla \tilde\ell_t\|_{l,\alpha}\leq 1$ for all $t\leq \tau_1$. Thus \eqref{prop-1.3} implies that $\hat u|_{t\in [0,\tau_1]}\in \mathcal U_{\tau_1, U}$. Now it is clear that one has $\Phi(\mathcal U_{\tau_1,U})\subset \mathcal U_{\tau_1,U}$ for $U$ and $\tau_1$ defined above.
\end{proof}

The next estimate is need for establishing contraction property of $\Phi$.

\begin{lemma}\label{lem-3}
Let $u, \bar u\in C\big([0,T], C^{l+1,\alpha}(\T^3, \R^3)\big)$ be satisfying
  $$\sup_{s\leq t} \big(\|u_s\|_{l+1,\alpha}\vee \|\bar u_s\|_{l+1,\alpha}\big) \leq U_t.$$
Let $Y, \,\bar Y$ be the flows generated by $u$ and $\bar u$, respectively. Then there exists a continuous function $\bar f_{l,\alpha}:[0,T]\times \R_+\to \R_+$ which is increasing in both variables, such that
  $$\|Y_t- \bar Y_t\|_{l,\alpha}\vee \|Y_t^{-1}- \bar Y_t^{-1}\|_{l,\alpha}\leq \bar f_{l,\alpha}(t,U_t)\int_0^t \|u_s-\bar u_s\|_{l,\alpha}\,\d s.$$
\end{lemma}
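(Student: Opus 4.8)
The plan is to mirror the structure of the proof of Lemma \ref{lem-2}, proceeding by induction on $l$ and closing each level with Gronwall's inequality, but now comparing two flows rather than estimating a single one. I would begin with the base case $l=0$. Subtracting the integral forms of the two ODEs gives
$$Y_t-\bar Y_t=\int_0^t\big[u_s(Y_s)-\bar u_s(\bar Y_s)\big]\,\d s,$$
and the key is to split the integrand as $[u_s(Y_s)-u_s(\bar Y_s)]+[(u_s-\bar u_s)(\bar Y_s)]$. The first bracket is controlled by $\|\nabla u_s\|_0\,|Y_s-\bar Y_s|\le U_s\|Y_s-\bar Y_s\|_0$, which feeds the Gronwall loop, while the second is bounded by $\|u_s-\bar u_s\|_0$, producing the forcing term $\int_0^t\|u_s-\bar u_s\|_0\,\d s$. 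Gronwall then yields $\|Y_t-\bar Y_t\|_0\le e^{tU_t}\int_0^t\|u_s-\bar u_s\|_0\,\d s$. For the H\"older seminorm at level zero I would repeat the same splitting after forming the spatial difference quotient at two points $x,y$, using the $\alpha$-H\"older continuity of $\nabla u_s$ together with the bound on $\|\nabla Y_s\|_0$ from Lemma \ref{lem-2}; the extra terms involving $|\nabla Y_s-\nabla\bar Y_s|$ are again absorbed by Gronwall.

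For the inductive step I would differentiate the ODE, so that
$$\nabla Y_t-\nabla\bar Y_t=\int_0^t\big[(\nabla u_s)(Y_s)\,\nabla Y_s-(\nabla\bar u_s)(\bar Y_s)\,\nabla\bar Y_s\big]\,\d s,$$
and insert and subtract intermediate terms to isolate the three differences $(\nabla u_s-\nabla\bar u_s)(Y_s)$, $(\nabla\bar u_s)(Y_s)-(\nabla\bar u_s)(\bar Y_s)$, and $\nabla Y_s-\nabla\bar Y_s$. Taking $C^{l-1,\alpha}$-norms and invoking the composition estimates of Lemma \ref{lem-1} converts these into products of $\|u_s-\bar u_s\|_{l,\alpha}$, of $\|Y_s-\bar Y_s\|_{l-1,\alpha}$ (controlled by the induction hypothesis), and of the top-order difference $\|\nabla Y_s-\nabla\bar Y_s\|_{l-1,\alpha}$. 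Crucially, the factors $(1+\|\nabla Y_s\|_{l-1,\alpha})$ and $(1+\|\nabla\bar Y_s\|_{l-1,\alpha})$ generated by Lemma \ref{lem-1} are bounded, via Lemma \ref{lem-2}, by a function of $U_t$ alone. A further Gronwall step then controls the top-order difference, and combining this with the base-case sup-norm bound delivers $\|Y_t-\bar Y_t\|_{l,\alpha}$.

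To handle the inverse flows I would use the time-reversal device already exploited in Lemma \ref{lem-2}: for fixed $t$, represent $Y_t^{-1}$ and $\bar Y_t^{-1}$ as the time-$t$ values of the backward flows solving $\dot Z_s=-u_{t-s}(Z_s)$ and $\dot{\bar Z}_s=-\bar u_{t-s}(\bar Z_s)$. Applying the preceding argument to these backward ODEs produces an identical estimate with forcing $\int_0^t\|u_{t-s}-\bar u_{t-s}\|_{l,\alpha}\,\d s=\int_0^t\|u_s-\bar u_s\|_{l,\alpha}\,\d s$, so the same bound holds for $\|Y_t^{-1}-\bar Y_t^{-1}\|_{l,\alpha}$. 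Defining $\bar f_{l,\alpha}$ as the maximum of the constants arising in the forward and backward estimates, which are assembled from $e^{tU_t}$ and the functions $f_{l,\alpha}$ of Lemma \ref{lem-2}, yields a function increasing in both arguments, as required.

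I expect the main obstacle to be the bookkeeping in the inductive step: after differentiating and subtracting, the chain and product rules generate numerous terms, and one must check that each term either reduces to $\|u_s-\bar u_s\|_{l,\alpha}$ (the desired forcing), to a lower-order flow difference covered by the induction hypothesis, or to the top-order difference that Gronwall absorbs. Tracking the dependence of all prefactors on $U_t$ alone, rather than on the individual flow norms, is precisely what makes the final constant a clean increasing function $\bar f_{l,\alpha}(t,U_t)$.
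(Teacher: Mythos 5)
Your proposal is correct and follows essentially the same route as the paper: subtract the integral forms of the two ODEs, split the difference of compositions into a term carrying $\|u_s-\bar u_s\|_{l,\alpha}$ and a term carrying $\|Y_s-\bar Y_s\|_{l,\alpha}$, bound the flow gradients via Lemma \ref{lem-2}, close with Gronwall, and treat the inverse flows by time reversal. The only difference is that the paper dispenses with your induction on $l$ by applying the second (difference-of-compositions) estimate of Lemma \ref{lem-1} directly at the $C^{l,\alpha}$ level, which packages in one step all the bookkeeping you describe carrying out by hand.
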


\begin{proof}
We have
  $$Y_t- \bar Y_t= \int_0^t \big(u_s(Y_s) - \bar u_s(\bar Y_s)\big)\,\d s.$$
Therefore, by Lemma \ref{lem-1},
  $$\aligned
  \|Y_t- \bar Y_t\|_{l,\alpha}&\leq \int_0^t \|u_s(Y_s) - \bar u_s(\bar Y_s)\|_{l,\alpha}\,\d s\\
  &\leq C_l \int_0^t \big(1+\|\nabla Y_s\|_{l-1,\alpha} + \|\nabla \bar Y_s\|_{l-1,\alpha}\big)^{l+1}\\
  &\hskip42pt \times \big[\|u_s-\bar u_s\|_{l,\alpha} + \big(\|\nabla u_s\|_{l,\alpha} \wedge \|\nabla \bar u_s\|_{l,\alpha}\big) \|Y_s- \bar Y_s\|_{l,\alpha}\big]\,\d s,
  \endaligned$$
Lemma \ref{lem-2} implies that
  $$\|\nabla Y_s\|_{l-1,\alpha} =\|\mathbb I + \nabla\lambda_s\|_{l-1,\alpha}\leq 1+ f_{l,\alpha}(s, U_s).$$
Similarly, $\|\nabla \bar Y_s\|_{l-1,\alpha}\leq 1+ f_{l,\alpha}(s, U_s)$. Substituting these estimates into the above inequality yields
  $$\|Y_t- \bar Y_t\|_{l,\alpha} \leq 3^{l+1}C_l\int_0^t \big[1+ f_{l,\alpha}(s, U_s)\big]^{l+1} \big[\|u_s-\bar u_s\|_{l,\alpha} + U_s\|Y_s-\bar Y_s\|_{l,\alpha}\big]\,\d s.$$
From this and Gronwall's inequality we obtain the first assertion. The proof of the second one follows analogously by reversing the time.
\end{proof}

Before proving that the map $\Phi$ is a contraction in a certain space, we introduce the following property of the operator $\mathbf W$ defined in Lemma \ref{lem-4} (see \cite[Proposition 3.1]{Iyer}).

\begin{lemma}\label{lem-5}
Let $l\geq 1$ and $\ell_i, v_i\in C^{l,\alpha}$, satisfying
  $$\|\nabla\ell_i\|_{l-1,\alpha}\leq L,\quad \|v_i\|_{l,\alpha}\leq V, \quad i=1,2.$$
Then there exists $C=C(l,\alpha, L)$ such that
  $$\|\mathbf W(\ell_1,v_1)- \mathbf W(\ell_2,v_2)\|_{l,\alpha} \leq C\big( V\|\ell_1 -\ell_2\|_{l,\alpha} + \|v_1 -v_2\|_{l,\alpha} \big).$$
\end{lemma}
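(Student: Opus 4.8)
The plan is to exploit the fact that the map underlying $\mathbf W$ is linear in $v$ and affine in $\nabla\ell$. Writing $\mathbf W(\ell,v)=\mathbf P v+\mathbf P[(\nabla\ell)^\ast v]$, I would first isolate the genuinely bilinear part
$$B(\ell,v):=\mathbf P\big[(\nabla\ell)^\ast v\big],$$
so that $\mathbf W(\ell,v)=\mathbf P v+B(\ell,v)$ with $B$ linear in each slot. A single add-and-subtract then gives the decomposition I intend to work with,
$$\mathbf W(\ell_1,v_1)-\mathbf W(\ell_2,v_2)=\mathbf W(\ell_1,v_1-v_2)+B(\ell_1-\ell_2,v_2),$$
which keeps $\ell_1$ paired with $v_1-v_2$ in the first term and collects the difference $\ell_1-\ell_2$ (paired with $v_2$) in the second. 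The advantage is that the two terms can be treated by two different tools.

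For the first term I would simply invoke Lemma \ref{lem-4}: since $\|\nabla\ell_1\|_{l-1,\alpha}\le L$,
$$\|\mathbf W(\ell_1,v_1-v_2)\|_{l,\alpha}\le C\big(1+\|\nabla\ell_1\|_{l-1,\alpha}\big)\|v_1-v_2\|_{l,\alpha}\le C(1+L)\|v_1-v_2\|_{l,\alpha}.$$
As the constant in Lemma \ref{lem-5} is permitted to depend on $L$, this already yields the required $\|v_1-v_2\|_{l,\alpha}$ contribution. The second term $B(\ell_1-\ell_2,v_2)$ is the genuinely bilinear piece and is where the real work lies; here I would use the homogeneous (in $\ell$) version of Lemma \ref{lem-4}, namely
$$\|B(\ell,v)\|_{l,\alpha}=\big\|\mathbf P[(\nabla\ell)^\ast v]\big\|_{l,\alpha}\le C\|\nabla\ell\|_{l-1,\alpha}\|v\|_{l,\alpha},$$
which is exactly the estimate established inside the proof of Lemma \ref{lem-4}, the additive constant $1$ there accounting only for the $\mathbf P v$ term. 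Applying it with $\ell=\ell_1-\ell_2$ and $v=v_2$, and using $\|\nabla(\ell_1-\ell_2)\|_{l-1,\alpha}\le\|\ell_1-\ell_2\|_{l,\alpha}$ together with $\|v_2\|_{l,\alpha}\le V$, gives
$$\|B(\ell_1-\ell_2,v_2)\|_{l,\alpha}\le CV\|\ell_1-\ell_2\|_{l,\alpha}.$$
Combining the two bounds and absorbing $1+L$ into a constant $C=C(l,\alpha,L)$ then produces the claim.

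The main obstacle is precisely this homogeneous bilinear estimate on $B$. It cannot be obtained by subtracting two applications of Lemma \ref{lem-4} as a black box, because $\mathbf W(\ell_1-\ell_2,v_2)=\mathbf P v_2+B(\ell_1-\ell_2,v_2)$ carries the spurious term $\mathbf P v_2$, whose norm does not vanish as $\ell_1\to\ell_2$ and would destroy the contraction. Moreover, a naive product estimate in $C^{l,\alpha}$ of $(\nabla\ell)^\ast v$ costs $\|\nabla\ell\|_{l,\alpha}$, one derivative too many, while the gradient identity $\mathbf P[(\nabla\ell)^\ast v]=-\mathbf P[(\nabla v)^\ast\ell]$ merely trades this for $\|\nabla v\|_{l,\alpha}=\|v\|_{l+1,\alpha}$, one derivative too many on the other factor. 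The crucial point is that the Leray--Hodge projection lets one keep $\nabla\ell$ at level $l-1$ and $v$ at level $l$ \emph{simultaneously}; I would therefore extract the homogeneous bilinear bound from the interior of the proof of Lemma \ref{lem-4} (equivalently from \cite[Corollary 3.1]{Iyer}), rather than using its statement as given, and the rest of the argument is then routine.
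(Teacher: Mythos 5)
Your argument is correct, and its bookkeeping checks out: the identity $\mathbf W(\ell_1,v_1)-\mathbf W(\ell_2,v_2)=\mathbf W(\ell_1,v_1-v_2)+B(\ell_1-\ell_2,v_2)$ is an exact algebraic consequence of $\mathbf W(\ell,v)=\mathbf P v+B(\ell,v)$ with $B$ bilinear, the first term is controlled by Lemma \ref{lem-4} with constant $C(1+L)$, and the second by the homogeneous bound $\|B(\ell,v)\|_{l,\alpha}\le C\|\nabla\ell\|_{l-1,\alpha}\|v\|_{l,\alpha}$ applied to $(\ell_1-\ell_2,v_2)$. Note, however, that the paper does not prove this lemma at all: it imports it verbatim as \cite[Proposition 3.1]{Iyer}, just as Lemma \ref{lem-4} is imported from \cite[Proposition 1]{Cons} and \cite[Corollary 3.1]{Iyer}. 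So there is no in-paper proof to match; what you have produced is a genuine reduction of the Lipschitz estimate to the boundedness estimate, and its only nontrivial input is exactly the point you flag: the homogeneous bilinear bound is \emph{not} a black-box consequence of the statement of Lemma \ref{lem-4} (because of the additive $1$, i.e.\ the $\mathbf P v$ term), but it \emph{is} the actual content of Constantin's Proposition 1 --- the Leray projection kills the gradient $\nabla(\partial^m\ell\cdot v)$ arising from the top-order Leibniz term, trading $l$ derivatives of $\nabla\ell$ for $l$ derivatives of $\ell$ plus one derivative of $v$, both within budget. Given that the paper itself treats Lemmas \ref{lem-4} and \ref{lem-5} as imported facts, relying on that interior estimate is a legitimate (and more transparent) route; a fully self-contained proof would still have to reproduce that commutator computation, so your proposal does not make Lemma \ref{lem-5} independent of the Constantin--Iyer machinery, it just isolates precisely which piece of it is needed.
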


\begin{proposition}\label{prop-2}
Let $U$ be given in Proposition \ref{prop-1}. There exists $\tau\in (0,\tau_1]$ such that $\Phi: \mathcal U_{\tau,U}\to \mathcal U_{\tau,U}$ is a contraction with respect to the weaker norm $\|u\|_{\mathcal U_{\tau,U}}= \sup_{t\leq\tau} \|u_t\|_{l,\alpha}$.
\end{proposition}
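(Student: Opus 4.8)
The plan is to track how a perturbation of the input propagates through the four-stage composition $u\mapsto\tilde u\mapsto(Y,Y^{-1})\mapsto\tilde\ell\mapsto\hat u$ that defines $\Phi$, and to show that each stage contributes only constants that stay bounded for $t\le\tau_1$, with the final difference controlled by $\int_0^t\|u_s-\bar u_s\|_{l,\alpha}\,\d s$. Since this integral carries a factor of order $\tau$, shrinking $\tau$ will render $\Phi$ a contraction in the weaker norm $\sup_{t\le\tau}\|\cdot\|_{l,\alpha}$. Throughout I fix $u,\bar u\in\mathcal U_{\tau,U}$ and denote the corresponding intermediate objects by $\tilde u,Y,\ell,\tilde\ell,\hat u$ and $\tilde{\bar u},\bar Y,\bar\ell,\tilde{\bar\ell},\hat{\bar u}$; crucially, the background flow $\varphi$ and the data $u_0$ are common to both.

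First I would estimate the pull-back difference. Since $\tilde u_t=K_t\,(u_t\circ\varphi_t)$ with $K_t=(\nabla\varphi_t)^{-1}$ the same for both inputs, linearity gives $\tilde u_t-\tilde{\bar u}_t=K_t\,\big((u_t-\bar u_t)\circ\varphi_t\big)$, and the Banach-algebra property of $C^{l,\alpha}$ together with the composition bound of Lemma \ref{lem-1} yields $\|\tilde u_t-\tilde{\bar u}_t\|_{l,\alpha}\le C_{l,\varphi,t}\|u_t-\bar u_t\|_{l,\alpha}$, with $C_{l,\varphi,t}$ as in \eqref{prop-1.1}. Next, since $Y,\bar Y$ are the flows of $\tilde u,\tilde{\bar u}$ and $\sup_{s\le t}(\|\tilde u_s\|_{l+1,\alpha}\vee\|\tilde{\bar u}_s\|_{l+1,\alpha})\le C_{l,\varphi,t}U=:\tilde U$, Lemma \ref{lem-3} (applied to $\tilde u,\tilde{\bar u}$) bounds $\|Y_t^{-1}-\bar Y_t^{-1}\|_{l,\alpha}$, hence $\|\ell_t-\bar\ell_t\|_{l,\alpha}$, by $\bar f_{l,\alpha}(t,\tilde U)\int_0^t\|\tilde u_s-\tilde{\bar u}_s\|_{l,\alpha}\,\d s$. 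Combining with the first step and recalling $\tilde\ell_t-\tilde{\bar\ell}_t=(\ell_t-\bar\ell_t)\circ\varphi_t^{-1}$ (the common term $m_t$ cancels), a further application of Lemma \ref{lem-1} gives
\[
\|\tilde\ell_t-\tilde{\bar\ell}_t\|_{l,\alpha}\le C_{l,\varphi,t}\,\bar f_{l,\alpha}(t,\tilde U)\int_0^t\|u_s-\bar u_s\|_{l,\alpha}\,\d s.
\]

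For the final stage I would invoke Lemma \ref{lem-5}. Writing $\hat u_t=\mathbf W(\tilde\ell_t,v_t)$ and $\hat{\bar u}_t=\mathbf W(\tilde{\bar\ell}_t,\bar v_t)$ with $v_t=u_0\circ({\rm I}+\tilde\ell_t)$, Proposition \ref{prop-1} supplies $\|\nabla\tilde\ell_t\|_{l-1,\alpha}\le\|\nabla\tilde\ell_t\|_{l,\alpha}\le 1$ for $t\le\tau_1$, so $L=1$ is admissible, and Lemma \ref{lem-1} bounds $V:=\sup_t(\|v_t\|_{l,\alpha}\vee\|\bar v_t\|_{l,\alpha})$ by a constant $C_l\|u_0\|_{l+1,\alpha}$. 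Lemma \ref{lem-5} then gives $\|\hat u_t-\hat{\bar u}_t\|_{l,\alpha}\le C(V\|\tilde\ell_t-\tilde{\bar\ell}_t\|_{l,\alpha}+\|v_t-\bar v_t\|_{l,\alpha})$, and the second summand is handled by the difference form of Lemma \ref{lem-1} with $f_1=f_2=u_0$: since the $\|f_1-f_2\|$-term vanishes, $\|v_t-\bar v_t\|_{l,\alpha}\le C\|u_0\|_{l+1,\alpha}\|\tilde\ell_t-\tilde{\bar\ell}_t\|_{l,\alpha}$. Hence $\|\hat u_t-\hat{\bar u}_t\|_{l,\alpha}$ is again controlled by $\|\tilde\ell_t-\tilde{\bar\ell}_t\|_{l,\alpha}$, and chaining all four estimates yields
\[
\sup_{t\le\tau}\|\hat u_t-\hat{\bar u}_t\|_{l,\alpha}\le C(l,U,\varphi,\|u_0\|_{l+1,\alpha},\tau)\,\tau\,\sup_{t\le\tau}\|u_t-\bar u_t\|_{l,\alpha},
\]
where the constant is increasing in $\tau$ and thus bounded on $[0,\tau_1]$; choosing $\tau\in(0,\tau_1]$ small enough makes the prefactor strictly less than $1$, which is the asserted contraction.

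The step I expect to require the most care is matching the two different norms: the contraction is only claimed in $C^{l,\alpha}$, whereas the invariant set $\mathcal U_{\tau,U}$ is defined by a $C^{l+1,\alpha}$ bound, and this gap is genuinely necessary. Indeed, each difference estimate loses one derivative---$\|u_0\circ({\rm I}+\tilde\ell)-u_0\circ({\rm I}+\tilde{\bar\ell})\|_{l,\alpha}$ produces the factor $\|\nabla u_0\|_{l,\alpha}$, and the flow and pull-back differences likewise rest on $C^{l+1,\alpha}$ control of the inputs. The arrangement that makes the argument close is therefore that the derivative-losing factors ($V$, $\tilde U$, and $\|\nabla u_0\|_{l,\alpha}$) are all absorbed into constants furnished by the $\mathcal U_{\tau,U}$-bound $U$, while the genuinely small quantity is the $C^{l,\alpha}$-difference together with the time integral. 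One should also verify that the single gradient bound $\|\nabla\tilde\ell_t\|_{l,\alpha}\le 1$ from Proposition \ref{prop-1} is exactly what renders the constant $L$ in Lemma \ref{lem-5} uniform over $\mathcal U_{\tau_1,U}$, so that the contraction constant does not secretly depend on the particular pair $u,\bar u$.
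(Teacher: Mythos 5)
Your proposal is correct and follows essentially the same route as the paper: the same four-stage decomposition $u\mapsto\tilde u\mapsto Y^{-1}\mapsto\tilde\ell\mapsto\hat u$, the same use of Lemmas \ref{lem-1}, \ref{lem-3} and \ref{lem-5}, the same cancellation of the common terms $K_t$, $m_t$ and $u_0$, and the same mechanism whereby the uniform bound $\|\nabla\tilde\ell_t\|_{l,\alpha}\le 1$ on $[0,\tau_1]$ makes the constants uniform while the time integral supplies the factor $\tau$ that yields the contraction. Your closing remarks on the necessity of the one-derivative gap between the invariant set and the contraction norm accurately reflect what the paper's estimates implicitly rely on.
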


\begin{proof}
\emph{Step 1}. Let $u_1,u_2\in \mathcal U_{\tau_1,U}$. Define $\tilde u_1, \tilde u_2$ as in \eqref{determ-system}. We have by Lemma \ref{lem-1} that
  \begin{equation}\label{prop-2.1}
  \aligned
  \|\tilde u_{1,t}- \tilde u_{2,t}\|_{l,\alpha}&\leq C_l\|(\nabla\varphi_t)^{-1}\|_{l,\alpha} \|u_{1,t}\circ\varphi_t - u_{2,t}\circ\varphi_t\|_{l,\alpha}\\
  &\leq C_l \|(\nabla\varphi_t)^{-1}\|_{l,\alpha} \|u_{1,t}-u_{2,t}\|_{l,\alpha} (1+\|\nabla \varphi_t \|_{l-1, \alpha})^{l+1}\\
  &\leq C_{l,\varphi, t} \|u_{1,t}-u_{2,t}\|_{l,\alpha},
  \endaligned
  \end{equation}
where $C_{l,\varphi, t}$ is defined in \eqref{prop-1.1}. Recall that by \emph{Step 1} in the proof of Proposition \ref{prop-1}, we have
  \begin{equation}\label{prop-2.1.5}
  \aligned
  \tilde U_t&:=\sup_{s\leq t}\big(\|\tilde u_{1,s}\|_{l+1,\alpha} \vee \|\tilde u_{2,s}\|_{l+1,\alpha}\big)\\
  &\, \leq C_{l,\varphi,t} \sup_{s\leq t}\big(\| u_{1,s}\|_{l+1,\alpha} \vee \|u_{2,s}\|_{l+1,\alpha}\big) \leq C_{l,\varphi,t} \, U.
  \endaligned
  \end{equation}

\emph{Step 2}. Let $Y_{i}$ be the flow associated to $\tilde u_{i}$ and $\ell_{i}=Y_{i}^{-1}- {\rm I},\, i=1,2$. By Lemma \ref{lem-3},
  $$  \aligned
  \|\ell_{1,t}- \ell_{2,t}\|_{l,\alpha}&= \big\|Y_{1,t}^{-1}- Y_{2,t}^{-1}\big\|_{l,\alpha}\leq \bar f_{l,\alpha}(t, \tilde U_t) \int_0^t \|\tilde u_{1,s}- \tilde u_{2,s}\|_{l,\alpha}\,\d s\\
  &\leq C_{l,\varphi, t}\bar f_{l,\alpha}(t,C_{l,\varphi, t}\, U) \int_0^t \|u_{1,s}- u_{2,s}\|_{l,\alpha}\,\d s,
  \endaligned $$
where the last inequality follows from \eqref{prop-2.1} and \eqref{prop-2.1.5}.

\emph{Step 3}. Recall that $m_t=\varphi_t^{-1} -{\rm I}$. Now for $\tilde\ell_{i,t} =m_t + \ell_{i,t}\circ \varphi_t^{-1},\, i=1,2$, one has
  \begin{equation}\label{prop-2.2}
  \aligned
  \|\tilde\ell_{1,t}- \tilde\ell_{2,t}\|_{l,\alpha}&= \big\|\ell_{1,t}\circ \varphi_t^{-1} -\ell_{2,t}\circ \varphi_t^{-1}\big\|_{l,\alpha}\\
  &\leq C_l \|\ell_{1,t}- \ell_{2,t}\|_{l,\alpha} \big(1+\|\nabla\varphi_t^{-1}\|_{l-1,\alpha} \big)^{l+1}\\
  &\leq C_{l,\varphi, t}^2 \bar f_{l,\alpha}(t,C_{l,\varphi, t}\, U) \int_0^t \|u_{1,s}- u_{2,s}\|_{l,\alpha}\,\d s.
  \endaligned
  \end{equation}
By the  definition of $\tau_1$, we have
  \begin{equation}\label{prop-2.2.5}
  \|\nabla\tilde\ell_{i,t}\|_{l,\alpha} \leq 1,\quad t\leq \tau_1,\quad  i=1,2.
  \end{equation}

\emph{Step 4}. Denote by $ X_{i,t}= \varphi_t\circ  Y_{i,t}$; then $X_{i,t}^{-1}={\rm I} + \tilde\ell_{i,t}$ and $\hat u_{i,t}= \mathbf{W}(\tilde\ell_{i,t}, u_0\circ X_{i,t}^{-1}),\, i=1,2$. By \eqref{prop-2.2.5} and Lemma \ref{lem-5}, there exists a constant $C$ depending only on $l$ and $\alpha$ such that
  \begin{equation}\label{prop-2.3}
  \|\hat u_{1,t}- \hat u_{2,t}\|_{l,\alpha} \leq C\big(V_t \|\tilde\ell_{1,t} - \tilde\ell_{2,t}\|_{l,\alpha} + \|u_0\circ X_{1,t}^{-1} - u_0\circ X_{2,t}^{-1}\|_{l,\alpha}\big),
  \end{equation}
where
  $$\aligned V_t& =\max_{i=1,2} \|u_0\circ X_{i,t}^{-1} \|_{l,\alpha} \leq C\|u_0\|_{l,\alpha} \max_{i=1,2} \big(1+ \|\nabla X_{i,t}^{-1}\|_{l-1,\alpha}\big)^{l+1}\\
  &\leq C\|u_0\|_{l,\alpha} \max_{i=1,2} \big(1+ \|\nabla\tilde \ell_{i,t}\|_{l-1,\alpha}\big)^{l+1} \leq C 2^{l+1} \|u_0\|_{l,\alpha} \leq U,
  \endaligned$$
where the third inequality follows from \eqref{prop-2.2.5}. By Lemma \ref{lem-1},
  $$\aligned \|u_0\circ X_{1,t}^{-1} - u_0\circ X_{2,t}^{-1}\|_{l,\alpha} &\leq C\big(1+ \|\nabla X_{1,t}^{-1}\|_{l-1,\alpha} + \|\nabla X_{2,t}^{-1}\|_{l-1,\alpha}\big)^{l+1} \|\nabla u_0\|_{l,\alpha} \|X_{1,t}^{-1} -X_{2,t}^{-1}\|_{l,\alpha}\\
  &\leq U \|\tilde \ell_{1,t} -\tilde \ell_{2,t}\|_{l,\alpha}.
  \endaligned$$
Therefore, substituting this estimate into \eqref{prop-2.3} yields
  $$\aligned\|\hat u_{1,t}- \hat u_{2,t}\|_{l,\alpha} &\leq CU \|\tilde \ell_{1,t} -\tilde \ell_{2,t}\|_{l,\alpha} \leq C_{l,\varphi, t}^2\, U \bar f_{l,\alpha}(t,C_{l,\varphi, t}\, U) \int_0^t \|u_{1,s}- u_{2,s}\|_{l,\alpha}\,\d s,
  \endaligned$$
where the last inequality follows from \eqref{prop-2.2}.  Consequently,
  \begin{equation*}
  \sup_{s\leq t} \|\hat u_{1,s}- \hat u_{1,s}\|_{l,\alpha} \leq C_{l,\varphi, t}^2\, U \bar f_{l,\alpha}(t,C_{l,\varphi, t}\, U)\, t \sup_{s\leq t} \|u_{1,s}- u_{1,s}\|_{l,\alpha}.
  \end{equation*}
Define
  \begin{equation}\label{prop-2.4}
  \tau:=\inf\big\{t\in (0,\tau_1]: C_{l,\varphi, t}^2\, U \bar f_{l,\alpha}(t,C_{l,\varphi, t}\, U) t > 1/2\big\}
  \end{equation}
with the convention that $\inf\emptyset= \tau_1$. Then it is clear that the map $\Phi$ is a contraction on $\mathcal U_{\tau,U}$ with respect to the norm $\|u\|_{\mathcal U_{\tau,U}}= \sup_{t\leq\tau} \|u_t\|_{l,\alpha}$.
\end{proof}

Finally we are ready to prove Theorem \ref{thm-deterministic}.

\begin{proof}[Proof of Theorem \ref{thm-deterministic}]
With the above preparations, the proof is the same as that in \cite{Iyer}. The existence of a fixed point of $\Phi$ follows by successive iteration. We define $u_{n+1}=\Phi(u_n)$. The sequence converges strongly with respect to the $C^{l,\alpha}$-norm. Since $\mathcal U_{\tau,U}$ is closed and convex, and the sequence $\{u_n\}_{n\geq 1}$ is uniformly bounded in the $C^{l+1,\alpha}$-norm, it must have a weak limit $u\in \mathcal U_{\tau,U}$. Since $\Phi$ is continuous with respect to the weaker $C^{l,\alpha}$-norm, this limit must be a fixed point of $\Phi$, and thus a solution of the deterministic system \eqref{determ-system}.
\end{proof}

\subsection{Local existence of \eqref{representation}}\label{local-existence}

Let $\Omega_0\subset \Omega$ be a full measure set such that for all $\omega\in \Omega_0$, $\varphi_t(\omega,\cdot)$ is a $C^{l+2,\alpha}$-diffeomorphism on $\T^3$ for any $t>0$. Let $\Phi_\omega$ be the map in Theorem \ref{thm-deterministic} associated to $\{\varphi_t(\omega, \cdot)\}_{t\geq 0}$. By Theorem \ref{thm-deterministic}, there exists $\tau(\omega)>0$ and a time-dependent vector field $u_\cdot (\omega)\in \mathcal U_{\tau(\omega), U}$ such that $\Phi_\omega(u_\cdot (\omega))= u_\cdot (\omega)$. It follows from the definition of $\tau$ in \eqref{prop-2.4} that it is a stopping time. Then the random vector field $u(\omega): [0,\tau(\omega))\times \T^3 \ni (t,x)\to u_t(\omega, x)\in \R^3$ is a solution to  \eqref{representation}.

\section{Discussions}

The inverse flow $A_{t}(x) =X_{t}^{-1}(x) $ is a
random vector field, solution of the stochastic transport equation%
\[
\d A_{t}( x) +u_{t}( x) \cdot \nabla A_{t}(x)\, \d t+\sum_{k}\sigma _{k}( x) \cdot \nabla A_{t}(x) \circ \d W_{t}^{k}=0.
\]%
This equation does not contain stretching terms of the form $A_{t}(
x) \cdot \nabla u_{t}( x)\, \d t$ and $A_{t}( x) \cdot \nabla \sigma _{k}( x) \circ \d W_{t}^{k}$, hence the
quantity $A_{t}( x) $ is only transported. Therefore we do not
expect a blow-up of $A_{t}( x) $ itself. We may however expect,
in analogy with shocks appearing in nonlinear transport equations (like
Burgers equation), that space derivatives of $A_{t}( x) $ may
blow-up. This is the potential mechanism which could lead to blow-up in the
formula%
\[
u_{t}( x) =\mathbf{P}\left[ \left( \nabla A_{t}\right)
^\ast u_{0}\circ A_{t}\right] \left( x\right)
\]%
as discussed in \cite[Section 5]{Cons}.

The question posed by the presence of noise is:\ could the noise prevent or
mitigate blow-up of $\nabla A_{t}$? If $u_{t}( x) $, in the SPDE
above, would be given (passive field $A_{t}$) and deterministic, several
results of regularization due to noise have been proved for similar
equations, for instance the absence of shocks for the scalar transport
equation with $u$ of class $L^{q}\left( 0,T;L^{p}\left( \mathbb{R}^{d},%
\mathbb{R}^{d}\right) \right) $ with $\frac{d}{p}+\frac{2}{q}<1$ (see
\cite{FF}), or the absence of singularity for a passive magnetic
field proved in \cite{FMN, FO} under various assumptions. The intuitive reason is that noise
prevents $A_{t}( x) $ to stretch for too much time around the
more singular points of $u_{t}( x) $, because $A_{t}(x) $ is continuosly randomly displaced. However, no result of this
form has been proved until now in the case when $u_{t}( x) $ is
random (see \cite{DR} for a related work), as it is in the nonlinear case; the obstruction is not technical but
conceptual: the singularities of $u_{t}( x) $ move accordingly to
noise and to $A_{t}( x) $ itself, hence there is no
straightforward reason why noise should displace $A_{t}( x) $ to
avoid those singularities.

Thus the question of singularities remains open, as it is in the
deterministic case but here, thanks to the noise, new intuitions may develop.

\bigskip

\noindent \textbf{Acknowledgements.} The second author is supported by the National Natural Science Foundation of China (Nos. 11431014, 11571347), the Seven Main Directions (Y129161ZZ1) and the Special Talent Program of the Academy of Mathematics and Systems Science, Chinese Academy of Sciences.

\end{document}